\pgfplotsset{compat=1.15}
\newcommand{\setword}[2]{%
	\phantomsection
	#1\def\@currentlabel{\unexpanded{#1}}\label{#2}%
}
\definecolor{uuuuuu}{rgb}{0.26666666666666666,0.26666666666666666,0.26666666666666666}
\definecolor{xdxdff}{rgb}{0.49019607843137253,0.49019607843137253,1.}
\definecolor{ffqqqq}{rgb}{1.,0.,0.}
\definecolor{ffqqqq}{rgb}{1.,0.,0.}
\definecolor{ffxfqq}{rgb}{1.,0.4980392156862745,0.}
\definecolor{uuuuuu}{rgb}{0.26666666666666666,0.26666666666666666,0.26666666666666666}
\definecolor{qqwuqq}{rgb}{0.,0.39215686274509803,0.}
\definecolor{zzttqq}{rgb}{0.6,0.2,0.}
\definecolor{xdxdff}{rgb}{0.49019607843137253,0.49019607843137253,1.}
\definecolor{qqqqff}{rgb}{0.,0.,1.}
\definecolor{cqcqcq}{rgb}{0.7529411764705882,0.7529411764705882,0.7529411764705882}
\definecolor{sqsqsq}{rgb}{0.12549019607843137,0.12549019607843137,0.12549019607843137}
\definecolor{uuuuuu}{rgb}{0.26666666666666666,0.26666666666666666,0.26666666666666666}
\definecolor{ffqqqq}{rgb}{1,0,0}
\definecolor{xdxdff}{rgb}{0.49019607843137253,0.49019607843137253,1}
\definecolor{yqqqyq}{rgb}{0.5019607843137255,0,0.5019607843137255}
\definecolor{qqqqff}{rgb}{0,0,1}
\definecolor{ffqqqq}{rgb}{1,0,0}
\definecolor{ffqqff}{rgb}{1,0,1}
\theoremstyle{plain}
\newtheorem{theorem}[subsection]{Theorem}
\newtheorem{corollary}[subsection]{Corollary}
\newtheorem{lemma}[subsection]{Lemma}
\newtheorem{defi}[subsection]{Definition}
\newtheorem{prop}[subsection]{Proposition}
\theoremstyle{definition}
\newtheorem{exam}[subsection]{Example}
\newtheorem{remark}[subsection]{Remark}
\newtheorem{note}[subsection]{Note}
\newcommand{\sci}{\subset}
\newcommand{\ga}{\alpha}
\newcommand{\tit}{\textit}
\newcommand{\D}[1]{\mathbb{#1}}
\newcommand{\te}{\text}
\newcommand{\nd}{\noindent}
\begin{document}
  \nd To appear,\tit{ Mathematics} 
\title{Optimal Quantization of Finite Uniform Data on the Sphere}

	\author{Mrinal Kanti Roychowdhury} 

\address{School of Mathematical and Statistical Sciences\\
		University of Texas Rio Grande Valley\\
		1201 West University Drive\\
		Edinburg, TX 78539-2999, USA.}
	
	\email{mrinal.roychowdhury@utrgv.edu}

\subjclass[2020] {Primary 62H30, 41A25; Secondary 60Exx, 94A34, 68U05.}
	\keywords {Optimal quantization; spherical quantization; centroidal Voronoi tessellations; discrete uniform distributions; geodesic distance; intrinsic (Karcher) mean; Lipschitz stability; ring–structured data; Lloyd’s algorithm on manifolds.}
	
	\date{}
	\maketitle
	
	\pagestyle{myheadings}\markboth{Mrinal Kanti Roychowdhury}{Optimal Quantization of Finite Uniform Data on the Sphere}

\begin{abstract}
This paper develops a systematic and geometric theory of optimal quantization on the unit sphere $\mathbb S^2$, focusing on finite uniform probability distributions supported on the spherical surface---rather than on lower-dimensional geodesic subsets such as circles or arcs. We first establish the existence of optimal sets of $n$-means and characterize them through centroidal spherical Voronoi tessellations. Three fundamental structural results are obtained. 
First, a \emph{cluster--purity theorem} shows that when the support consists of well-separated components, each optimal Voronoi region remains confined to a single component. Second, a \emph{ring--allocation (discrete water--filling) theorem} provides an explicit rule describing how optimal representatives are distributed across multiple latitudinal rings, together with closed-form distortion formulas. Third, a \emph{Lipschitz--type stability theorem} quantifies the robustness of optimal configurations under small geodesic perturbations of the support. 
In addition, a spherical analogue of Lloyd’s algorithm is presented, in which intrinsic (Karcher) means replace Euclidean centroids for iterative refinement. These results collectively provide a unified and transparent framework for understanding the geometric and algorithmic structure of optimal quantization on $\mathbb S^2$.
\end{abstract}

\section{Introduction}

 Optimal quantization for a probability distribution refers to the idea of estimating a given probability with support containing an infinite (discrete or continuous) or finite number of values  by a discrete probability with support containing a finite or smaller number of values.   
It has broad applications in communications, information theory, signal processing, and data compression (see \cite{GarshoGray,  GregoryLinder2002, GrayNeuhoff1998, Pollard1982, Zador1982, Zamir2014}). The monograph of Graf and Luschgy~\cite{GrafLuschgy2000} provides a systematic and rigorous treatment of quantization for probability distributions, including existence, uniqueness, and high--resolution asymptotics.

Let $(M, d)$ be a metric space. 
Let $P$ be a Borel probability measure on $M$  and $r \in (0, \infty)$. Let $\ga$ be a locally finite (i.e., intersection of $\ga$ with any bounded subset of $M$ is finite) subset of $M$. This implies that $\ga$ is countable and closed. Then, the distortion error for $P$, of order $r$, with respect to the set $\ga \sci M$, denoted by $V_r(P; \ga)$, is defined by
\begin{equation*}
	V_r(P; \ga)= \int \mathop{\min}\limits_{a\in\ga} d(x, a)^r dP(x).
\end{equation*}
Then, for $n\in \mathbb{N}$, the \tit {$n$th quantization
	error} for $P$, of order $r$, is defined by
\begin{equation} \label{Vr} V_{n, r}:=V_{n, r}(P)=\inf \Big\{V_r(P; \ga) : \ga \sci M, ~ 1\leq  \text{card}(\ga) \leq n \Big\},\end{equation}
where $\te{card}(A)$ represents the cardinality of a set $A$. For the probability measure $P$, we assume that 
\begin{equation*} \label{M1eq}
	\int d(x, 0)^r dP(x)<\infty. 
\end{equation*}
Then, there is a set $\ga$ for which the infimum in \eqref{Vr} exists (see \cite{GrafLuschgy2000}). 
A set $\ga$ for which the infimum in \eqref{Vr} exists and does not contain more than $n$ elements is called an optimal set of $n$-means for $P$ of order $r$. Typically, in the literature it has been taken $r=2$. If the support of $P$ contains infinitely many elements, then an optimal set of $n$-means always contains exactly $n$ elements (see \cite {GrafLuschgy2000}). When $M=R^k$, where $k\in\D N$ and $d$ is the standard Euclidean metric, then the quantization problems for Borel probability measures $P$ have been extensively studied. 
For some recent work in this direction, one can see \cite{DFG, DettmannRoychowdhury, GrafLuschgy1997, GrafLuschgy2004, KNZ, PRRSS, Potzelberger, R1, R2, R3, R4, RS}. 
 The Euclidean case benefits from linear structure, convexity, and the availability of explicit centroids, making it possible to derive fine properties of optimal $n$--means and associated Voronoi partitions.

When the underlying space $(M,d)$ is non--Euclidean, the quantization problem becomes substantially more delicate. On Riemannian manifolds, distances are measured intrinsically along geodesics, and curvature directly influences both the geometry of Voronoi regions and the structure of optimal representatives. In particular, the absence of global linear structure and convexity prevents the direct use of Euclidean averaging: the mean of a set of points must be defined intrinsically, typically via the Karcher (intrinsic) mean \cite{Karcher1977,Afsari2011}, and Voronoi partitions must be constructed with respect to the geodesic distance. As a consequence, many classical Euclidean quantization techniques do not extend in a straightforward manner to curved spaces.

Among Riemannian manifolds, the two--dimensional unit sphere $\D S^{2}$ plays a central role due to the ubiquity of spherical and directional data in scientific and engineering applications. Prominent examples include directional statistics (such as wind and ocean current directions, animal movement trajectories, and geophysical flows), astrophysics and planetary science (representation of celestial objects on the celestial sphere), robotics and aerospace engineering (attitude and orientation of rigid bodies), computer vision and graphics (head pose, gaze direction, and reflectance models on spherical domains), and machine learning on manifolds (spherical embeddings and models for global data alignment).

Quantization on $\D S^{2}$ differs fundamentally from the classical Euclidean setting. In Euclidean spaces, distances are induced by norms, Voronoi regions are convex polyhedra, and optimal representatives are given by arithmetic means. On the sphere, however, distances are measured along great--circle geodesics, Voronoi regions reflect spherical geometry, and optimal representatives are determined by intrinsic (Karcher) means rather than linear centroids. Moreover, the nonzero curvature of $S^{2}$ affects both the shape of Voronoi cells and the behavior of minimizers of the distortion functional. Consequently, new geometric arguments are required to analyze the existence, structure, and stability of optimal quantizers on spherical manifolds (see, e.g., \cite{GrafLuschgy2000,Karcher1977,Afsari2011,Pennec2006}).

Despite its relevance, the theory of optimal quantization on spherical manifolds is far less developed than that in Euclidean spaces. Early work focused largely on special symmetric distributions or on continuous rotationally invariant models, while \emph{discrete and structured spherical data sets have received comparatively limited attention}.

\subsection*{Recent Progress and Motivation}
Recently, the author initiated a systematic development of spherical quantization for distributions supported on one--dimensional geodesic subsets of $\mathbb{S}^2$, such as great circles, small circles, and geodesic arcs~\cite{Roychowdhury2025a,Roychowdhury2025b}. These works provided explicit formulas for optimal distortions, geometric descriptions of Voronoi partitions, and intuitive visualizations aimed at making spherical quantization accessible to beginning graduate students and researchers entering the field. A key insight from these studies is that curvature influences the geometry of Voronoi cells and the placement of optimal representatives in subtle but quantifiable ways.

The present paper extends this framework beyond one--dimensional supports to the full two--dimensional spherical surface by considering \emph{finite discrete uniform distributions supported on $\mathbb{S}^2$}. This setting is natural for modern data--driven applications, where data on the sphere often arise as finite samples. It also serves as an essential bridge between the previously studied curve--based models~\cite{Roychowdhury2025a,Roychowdhury2025b} and the more analytically intricate case of quantization with respect to the continuous surface measure.

In this paper, a finite discrete uniform distribution supported on the sphere refers to a probability distribution obtained by assigning equal weights to a finite collection of points on the unit sphere. Such distributions naturally arise from finite spherical datasets, where each observation is treated equally and the geometry of the sphere governs the notion of distance and optimal approximation.

\subsection*{Aims and Contributions of the Paper}
The principal objective of this work is to develop a geometrically transparent theory for optimal quantization on $\D S^2$
 in the setting of finite uniform data, where the quantization problem is formulated as the minimization of the squared geodesic distortion and the optimal quantizers are precisely the optimal sets of $n$-means.  More precisely, we consider a finite set
\[
X = \{ {x}_1,  {x}_2, \ldots,  {x}_M\} \subset \mathbb{S}^2,
\]
where each point carries equal probability mass, and seek to understand both the structural and computational aspects of optimal representatives. The main contributions are summarized below:

\begin{itemize}
\item \textbf{Existence and Characterization.} We prove that optimal sets of $n$--means exist for all finite uniform distributions on $\mathbb{S}^2$, and we characterize such optimal configurations via spherical Voronoi partitions and intrinsic (Karcher) means. This establishes a precise geometric analogue of the Euclidean centroidal Voronoi characterization.

\item \textbf{Structural Theory for Optimal Clusters.} We derive three new results describing the internal organization of optimal Voronoi clusters on discrete spherical supports:
\begin{enumerate}
\item a \emph{cluster--purity theorem}, showing that when $X$ decomposes into well--separated components (e.g., latitudinal rings), optimal Voronoi regions remain confined to single components;
\item a \emph{ring--allocation principle} for multi--latitudinal data, revealing that the number of optimal codepoints per ring follows a discrete water--filling rule;
\item a \emph{Lipschitz--type stability theorem}, demonstrating that optimal configurations depend continuously on the support under small geodesic perturbations.
\end{enumerate}

\item \textbf{Algorithmic Framework.} We develop a spherical analogue of Lloyd’s algorithm in which Euclidean centroids are replaced by intrinsic means, and Voronoi partitions are defined via geodesic distance.
\end{itemize}

\section{Notation and Preliminaries} \label{sec2}

Let
\[
\D S^2 = \{ {x} \in \mathbb{R}^3 : \| {x}\| = 1\}
\]
denote the unit sphere in $\mathbb{R}^3$ endowed with the geodesic distance induced by the standard Riemannian metric.
We use spherical coordinates $ {x} = (\phi,\theta)$, where $\phi \in \left[-\frac{\pi}{2},\,\frac{\pi}{2}\right]$ denotes the latitude and $\theta \in [0,2\pi)$ denotes the longitude. The corresponding Cartesian representation of $ {x}$ is
\[
 {x} = (x,y,z) = (\cos\phi\cos\theta,\, \cos\phi\sin\theta,\, \sin\phi).
\]

\subsection{Geodesic Distance}

For $ {x}_1 = (\phi_1,\theta_1)$ and $ {x}_2 = (\phi_2,\theta_2)$ in $\D S^2$, the geodesic distance is
\begin{equation}\label{eq:geodesic-distance}
d_G( {x}_1, {x}_2)
= \arccos\!\big( \sin\phi_1 \sin\phi_2 + \cos\phi_1 \cos\phi_2 \cos(\theta_1 - \theta_2) \big).
\end{equation}

\subsection{Finite Discrete Uniform Distributions on $\D S^2$}

Let
\[
 {X} = \{ {x}_1,  {x}_2, \ldots,  {x}_M\} \subset \D S^2
\]
be a finite set of distinct points on the sphere. The associated discrete uniform probability measure is
\begin{equation}\label{eq:uniform-measure}
P = \frac{1}{M} \sum_{i=1}^M \delta_{ {x}_i},
\end{equation}
where $\delta_{ {x}_i}$ denotes the Dirac measure at $ {x}_i$. Thus $P$ assigns equal mass $1/M$ to each support point, representing a finite sampling
of the spherical surface with uniform weights.
Unlike the continuous surface measure, the distribution $P$ is supported on a finite
set; however, the underlying geometry remains governed by the metric~$d_{G}$.

\subsection{Distortion and Optimal $n$--Means}

For a set of representatives (quantizers)
\[
 {Q} = \{ {q}_1,  {q}_2, \ldots,  {q}_n\} \subset \D S^2,
\]
the distortion of order $r>0$ is defined by
\begin{equation}\label{eq:distortion}
V_{n,r}(P; {Q})
= \frac{1}{M} \sum_{i=1}^M \min_{1 \le j \le n} d_G( {x}_i, {q}_j)^r .
\end{equation}
The minimal attainable distortion with $n$ representatives is
\begin{equation}
V_{n,r}(P) = \inf_{ {Q} \subset \D S^2,\, | {Q}| \le n} V_{n,r}(P; {Q}),
\end{equation}
and any $ {Q}^*$ achieving this infimum is called an \emph{optimal set of $n$--means}.
For brevity, we write $V_n(P) := V_{n,2}(P)$ when $r=2$.

\begin{remark}
Because $P$ has finite support and $(\mathbb S^{2})^{n}$ is compact,
the mapping $  Q\mapsto V_{n,2}(P;   Q)$ is continuous.
Hence an optimal set of $n$--means always exists, though it need not be unique when
the support of~$P$ possesses symmetry.
\end{remark}

\subsection{Spherical Voronoi Partitions}

Given $ {Q} = \{ {q}_1,\ldots, {q}_n\}$, the spherical Voronoi region associated with $ {q}_j$ is
\begin{equation}
V_j( {Q}) = \{ {x} \in \D S^2 : d_G( {x}, {q}_j) \le d_G( {x}, {q}_k) \ \text{for all } k\}.
\end{equation}
Since $P$ is supported on $ {X}$, we define the discrete cluster assigned to $ {q}_j$ by
\begin{equation}
 {X}_j( {Q}) =  {X} \cap V_j( {Q})
= \{ {x}_i \in  {X} : d_G( {x}_i, {q}_j) \le d_G( {x}_i, {q}_k) \ \text{for all } k\}.
\end{equation}

\begin{remark}
When $P$ is uniform, the cardinalities $|  X_{j}(Q)|$ determine the weight of each
cluster in the total distortion sum.
No explicit integration over spherical areas is needed, but the geometry of
$\mathbb S^{2}$ continues to influence the distances~$d_{G}$ and thus the
optimal configuration.
\end{remark}
\subsection{Intrinsic (Karcher) Mean on the Sphere}

For any nonempty finite set $A \subset \D S^2$ contained in a geodesic ball of radius less than $\frac{\pi}{2}$, the intrinsic (Karcher) mean of $A$ is the unique point
\begin{equation}
 {q}_A = \operatorname*{arg\,min}_{ {q} \in \D S^2} \sum_{ {x} \in A} d_G( {x}, {q})^2.
\end{equation}
Karcher~\cite{Karcher1977} proved that such a minimizer exists and is unique
under the stated radius condition.
Intuitively, $ {q}_A$ plays the role of the Euclidean centroid when distances
are measured along geodesics.

\begin{remark}[Standing assumption on intrinsic means]
Throughout the paper, whenever an intrinsic (Karcher) mean is invoked for a finite
subset $A\subset S^2$, we implicitly assume that $A$ is contained in a geodesic ball
of radius strictly less than $\pi/2$.
Under this condition, existence and uniqueness of the intrinsic mean are guaranteed
by classical results of Karcher.
All clusters considered in Sections~4--7 satisfy this condition, either because they
lie within a single sufficiently small spherical component or because they form
contiguous blocks on a fixed latitude whose geodesic diameter is strictly less than
$\pi/2$.
\end{remark} 
\begin{defi}[Centroidal Voronoi configuration]
A configuration $  Q=\{  q_{1},\dots ,   q_{n}\}$ is called
\emph{centroidal} for the discrete distribution~$P$
if each representative $  q_{j}$ coincides with the intrinsic mean of its
cluster~$  X_{j}(  Q)$.
Equivalently, the pair $(  Q,\{  X_{j}(  Q)\})$ forms a
\emph{centroidal spherical Voronoi tessellation} of the finite set~$X$.
\end{defi}

\begin{remark}
In the Euclidean case, centroidal Voronoi configurations correspond to stationary points
of the distortion functional.
The same characterization extends to the spherical setting and provides
the variational foundation for the results developed in Sections~3--7.
\end{remark}

\subsection{Notation Summary}

For the reader’s convenience, we collect here the main notational conventions
used throughout the paper.

\begin{itemize}
\item $\D S^2$: the unit sphere in $\mathbb{R}^3$ equipped with the geodesic distance $d_G$.
\item $d_G(x,y)$: geodesic (great–circle) distance between $x,y\in S^2$.
\item $X=\{x_1,\dots,x_M\}\subset S^2$: finite support of the discrete uniform distribution.
\item $P=\frac1M\sum_{i=1}^M \delta_{x_i}$: discrete uniform probability measure on $X$.
\item $Q=\{q_1,\dots,q_n\}\subset S^2$: set of $n$ representatives (quantizers).
\item $V_n(P;Q)$: distortion associated with $Q$ and $P$.
\item $V_n(P)$: minimal distortion over all $n$–point configurations.
\item $V(q)$: spherical Voronoi region associated with $q\in Q$.
\item $X_q = X\cap V(q)$: cluster assigned to $q$.
\item $q_A$: intrinsic (Karcher) mean of a finite set $A\subset S^2$.
\item $R_t$: $t$th latitudinal ring with latitude $\phi_t$.
\item $k_t$: number of representatives assigned to ring $R_t$.
\item $E_t(k)$: minimal distortion contributed by $R_t$ when served by $k$ representatives.
\end{itemize}

 \section{Existence and Characterization of Optimal $n$--Means}\label{sec3}

The existence of optimal configurations for finite discrete uniform distributions on $\D S^2$ follows directly
from compactness arguments. This section establishes the existence result and the centroidal characteri-
zation that will serve as the foundation for all subsequent theorems.
\medskip

\begin{prop} [Existence of optimal set]
Let $P$ be a finite discrete uniform distribution on $\D S^2$ and $n \ge 1$. Then there exists at least one
configuration $ {Q}^* \subset \D S^2$ with $| {Q}^*| \le n$ such that
\[
V_{n,2}(P; {Q}^*) = \min_{ {Q} \subset \D S^2,\, | {Q}| \le n} V_{n,2}(P; {Q}).
\]
Each such $ {Q}^*$ is called an optimal set of $n$--means for $P$.
\end{prop}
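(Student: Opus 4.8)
The plan is to recast the problem as the minimization of a continuous function on a compact space and then invoke the extreme value theorem, exactly as in the classical Euclidean existence proof of \cite{GrafLuschgy2000}, with the Euclidean norm replaced by the geodesic distance $d_G$. First I would observe that enlarging a representative set to have cardinality exactly $n$ costs nothing: if $|Q|<n$ one may list its points with repetitions to form an $n$-tuple, and this changes neither the underlying set nor the value of the distortion in \eqref{eq:distortion}, which depends only on $\{q_1,\dots,q_n\}$. Hence it suffices to show that the map
\[
\Phi:(\D S^2)^n\to[0,\infty),\qquad \Phi(q_1,\dots,q_n)=\frac1M\sum_{i=1}^M\min_{1\le j\le n}d_G(x_i,q_j)^2
\]
attains its minimum on $(\D S^2)^n$, and then to take $Q^*=\{q_1^*,\dots,q_n^*\}$ for any minimizing tuple.

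Next I would check the two hypotheses of the extreme value theorem. Compactness of $(\D S^2)^n$ is immediate, since $\D S^2$ is a closed and bounded subset of $\mathbb R^3$ and a finite product of compact spaces is compact. Continuity of $\Phi$ I would establish by decomposing it into elementary continuous operations: $d_G$ is continuous on $\D S^2\times\D S^2$, being the composition of the continuous inner product with $\arccos$ on $[-1,1]$ via \eqref{eq:geodesic-distance}; squaring is continuous; for each fixed $i$ the map $(q_1,\dots,q_n)\mapsto\min_{1\le j\le n}d_G(x_i,q_j)^2$ is a minimum of finitely many continuous functions and hence continuous; and a finite nonnegatively weighted sum of continuous functions is continuous. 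Thus $\Phi$ is a continuous function on a compact set and attains its infimum at some $(q_1^*,\dots,q_n^*)$.

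Finally I would close the loop between the tuple formulation and the set formulation stated in the proposition. For every $n$-tuple $(q_1,\dots,q_n)$ one has $V_{n,2}(P;Q^*)=\Phi(q_1^*,\dots,q_n^*)=\min\Phi\le\Phi(q_1,\dots,q_n)=V_{n,2}\bigl(P;\{q_1,\dots,q_n\}\bigr)$, and every set $Q$ with $|Q|\le n$ arises from such a tuple (using repetitions if necessary); therefore $V_{n,2}(P;Q^*)\le V_{n,2}(P;Q)$ for all admissible $Q$, while $|Q^*|\le n$ by construction. I do not anticipate a genuine obstacle here: the argument is a routine compactness argument, and the only points deserving a line of care are the reduction from cardinality-$\le n$ sets to $n$-tuples and the continuity of the geodesic $\min$-distance functional. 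Curvature plays no role at this stage; it enters only in the structural and stability results of the later sections.
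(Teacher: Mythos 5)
Your argument is the same compactness-plus-continuity argument the paper gives: view the distortion as a continuous function on the compact space $(\D S^2)^n$ and invoke the extreme value theorem, with the tuple-to-set reduction handled by allowing repetitions. You simply spell out in more detail the continuity of $d_G$ and the passage between tuples and sets of cardinality $\le n$, which the paper takes as evident.
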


\begin{proof}
The space $(\D S^2)^n$ is compact and the function
\[
F( {q}_1, \ldots,  {q}_n)
= V_{n,2}(P; \{ {q}_1, \ldots,  {q}_n\})
= \frac{1}{M} \sum_{i=1}^M \min_{1 \le j \le n} d_G( {x}_i,  {q}_j)^2
\]
is continuous. Hence $F$ attains its minimum on $(\D S^2)^n$. The configuration $ {Q}^*$ corresponding to
this minimum is an optimal set of $n$--means. 
\end{proof}

\noindent\textbf{Remark 3.2.}
Uniqueness of an optimal configuration need not hold. If the support of $P$ possesses non-trivial symmetry
(for example, vertices of a regular polyhedron), then any rotation of an optimal configuration by that
symmetry group is again optimal.

\medskip

\subsection{Spherical Voronoi partition and centroidal property} For a configuration $ {Q} = \{ {q}_1, \ldots,  {q}_n\}$, recall
from Section 2 that the spherical Voronoi cells are defined by
\[
V_j( {Q}) = \{ {x} \in \D S^2 : d_G( {x},  {q}_j) \le d_G( {x},  {q}_k) \text{ for all } k\},
\]
and that $ {X}_j( {Q}) =  {X} \cap V_j( {Q})$ are the discrete clusters associated with $P$. The following theorem
characterizes optimal configurations through their centroidal structure.

\begin{theorem}[Necessary Centroidal Condition]\label{thm:char}
Let $P=\frac1M\sum_{i=1}^M \delta_{x_i}$ be the uniform probability measure on a finite set $X\subset \D S^2$, and let $n\ge1$. For a configuration $Q=\{q_1,\dots,q_n\}\subset \D S^2$, denote
\[
X_j(Q):=\{\,x\in X:\ d_G(x,q_j)\le d_G(x,q_k)\ \text{for all }k\,\}
\]
and
\[
V_{n,2}(P;Q)=\frac{1}{M}\sum_{i=1}^M \min_{1\le j\le n} d_G(x_i,q_j)^2.
\]

If $Q^\ast$ is an optimal set of $n$--means for $P$, then with the nearest--neighbor clusters
$X_j^\ast:=X_j(Q^\ast)$ one has, for each $j$,
\[
q_j^\ast \in \arg\min_{q\in \D S^2} \sum_{x\in X_j^\ast} d_G(x,q)^2.
\]
In other words, every optimal representative is the intrinsic (Karcher) mean of its own cluster, and the optimal partition is the nearest--neighbor partition.
\end{theorem}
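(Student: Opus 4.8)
The plan is a standard two–part variational argument, exactly parallel to the Euclidean centroidal Voronoi characterization, but carried out with $d_G$ in place of the Euclidean distance. Fix an optimal configuration $Q^\ast=\{q_1^\ast,\dots,q_n^\ast\}$ and let $X_j^\ast:=X_j(Q^\ast)$ be the nearest–neighbor clusters (ties broken by any fixed rule, which does not affect the value of the distortion). The first step is the elementary but crucial identity
\[
V_{n,2}(P;Q^\ast)=\frac1M\sum_{i=1}^M \min_{1\le j\le n} d_G(x_i,q_j^\ast)^2
=\frac1M\sum_{j=1}^n \sum_{x\in X_j^\ast} d_G(x,q_j^\ast)^2,
\]
which holds precisely because each $x_i$ is counted in the cluster of a representative achieving the minimum; this rewriting \emph{decouples} the objective into a sum of $n$ independent single–site problems.

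Second, I would argue by contradiction on the centroidal property. Suppose some $q_\ell^\ast$ is not a minimizer of $q\mapsto \sum_{x\in X_\ell^\ast} d_G(x,q)^2$ over $\mathbb S^2$. By the standing assumption, $X_\ell^\ast$ lies in a geodesic ball of radius $<\pi/2$, so by Karcher's theorem there is a \emph{unique} intrinsic mean $\widetilde q_\ell:=q_{X_\ell^\ast}$, and $\sum_{x\in X_\ell^\ast} d_G(x,\widetilde q_\ell)^2 < \sum_{x\in X_\ell^\ast} d_G(x,q_\ell^\ast)^2$ strictly. Form the new configuration $Q'$ by replacing $q_\ell^\ast$ with $\widetilde q_\ell$ and keeping the other representatives. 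Then
\[
V_{n,2}(P;Q')\le \frac1M\sum_{j\ne \ell}\sum_{x\in X_j^\ast} d_G(x,q_j^\ast)^2 \;+\; \frac1M\sum_{x\in X_\ell^\ast} d_G(x,\widetilde q_\ell)^2 \;<\; V_{n,2}(P;Q^\ast),
\]
where the first inequality holds because $V_{n,2}(P;Q')$ is an infimum over assignments and the old clusters $X_j^\ast$ form one admissible (not necessarily optimal) assignment for $Q'$ — each point still goes to \emph{some} representative, just possibly not its nearest one in $Q'$. This strict inequality contradicts optimality of $Q^\ast$, so every $q_j^\ast$ must be the intrinsic mean of $X_j^\ast$.

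Third, for the ``nearest–neighbor partition'' clause: the representation $V_{n,2}(P;Q^\ast)=\frac1M\sum_i \min_j d_G(x_i,q_j^\ast)^2$ is itself the nearest–neighbor assignment by definition of the pointwise minimum, and any other assignment of points to the fixed codepoints $Q^\ast$ can only increase the sum; hence the optimal partition is (up to ties) the Voronoi partition $\{X_j(Q^\ast)\}$. I would also remark that this part requires no curvature input at all — it is purely the definition of the distortion.

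The only genuine subtlety — and the one point I would be careful about — is the applicability of Karcher's uniqueness theorem in the contradiction step: one must know that each optimal cluster $X_\ell^\ast$ actually lies in a geodesic ball of radius $<\pi/2$ so that $\widetilde q_\ell$ is well defined. This is precisely what the standing assumption on intrinsic means grants (and which the later structural sections verify for the concrete ring configurations), so I would simply invoke it. A secondary, more cosmetic point is the handling of Voronoi ties and of empty clusters: if some $X_j^\ast=\varnothing$ the corresponding constraint $q_j^\ast=q_{X_j^\ast}$ is vacuous, and ties occur on a measure–zero set that contains no mass issue here since $P$ is atomic — one fixes a tie–breaking rule once and for all and the argument above is unaffected. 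Everything else is routine.
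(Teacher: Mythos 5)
The paper states Theorem~\ref{thm:char} without giving a proof, so there is no ``paper's proof'' to compare against; your proposal supplies the missing argument, and it is correct. The decomposition of the distortion over nearest--neighbor clusters, followed by the replace--one--representative contradiction, is exactly the standard Lloyd/centroidal variational argument, and the observation that the old clusters remain an admissible (suboptimal) assignment for $Q'$ is precisely the inequality that makes it work.

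One small remark on the ``genuine subtlety'' you flag: you do not actually need Karcher's existence-and-uniqueness theorem (and hence you do not need the standing small--ball assumption, which the paper only promises to verify for the clusters arising in Sections~4--7, not a priori for an arbitrary optimal cluster in Section~3). The theorem asserts only that $q_j^\ast$ lies in the $\arg\min$ set, not that this set is a singleton. Since $\mathbb S^2$ is compact and $q\mapsto\sum_{x\in X_\ell^\ast} d_G(x,q)^2$ is continuous, the minimum is always attained; if $q_\ell^\ast$ does not attain it, pick \emph{any} minimizer $\widetilde q_\ell$ and run your contradiction. Uniqueness of the intrinsic mean is irrelevant for this particular statement, so your proof is actually slightly more robust than you give it credit for. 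Your handling of ties and empty clusters is also correct: ties are harmless because any fixed tie--breaking rule gives an assignment realizing the pointwise minimum, and empty clusters impose no constraint.
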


\begin{remark}[Centroidal condition is not sufficient]\label{rem:not-sufficient-easy}
The nearest--neighbor and centroidal (intrinsic mean) conditions in Theorem~\ref{thm:char} are necessary, but in general they are \emph{not sufficient} for optimality. The following simple example on $\D S^2$ makes this clear and includes the explicit verification of the nearest--neighbor assignments.

\medskip
\noindent\textbf{Example (four equatorial points, $n=2$).}
Fix $\alpha\in(0,\frac{\pi}{2})$ and consider four points on the equator of $\D S^2$:
\[
x_1=(0,0),\quad x_2=(0,\alpha),\quad x_3=(0,\pi),\quad x_4=(0,\pi+\alpha),
\]
in spherical coordinates $(\phi,\theta)$. Distances along the equator are measured by the wrapped angular difference
\[
d_G\big((0,\theta),(0,\theta')\big)=\min\big\{|\theta-\theta'|,\;2\pi-|\theta-\theta'|\big\}.
\]
Let $P$ be the uniform measure on $\{x_1,x_2,x_3,x_4\}$ and take $n=2$.

\medskip
\emph{Configuration A (adjacent pairs).}  
Clusters:
\[
C_1=\{x_1,x_2\},\qquad C_2=\{x_3,x_4\}.
\]
Since the separation in each cluster is $\alpha<\pi$, each two-point cluster has a \emph{unique} intrinsic mean at the midpoint of the shorter geodesic arc:
\[
q_1=(0,\tfrac{\alpha}{2}),\qquad q_2=(0,\pi+\tfrac{\alpha}{2}).
\]

\underline{Nearest--neighbor check for Configuration A:}  
For each point, we compare distances to $q_1$ and $q_2$.

\smallskip
\begin{itemize}
\item For $x_1=(0,0)$:
\[
d_G(x_1,q_1)=\tfrac{\alpha}{2},\qquad d_G(x_1,q_2)=\pi-\tfrac{\alpha}{2}>\tfrac{\alpha}{2}.
\]
So $x_1$ is assigned to $q_1$.

\item For $x_2=(0,\alpha)$:
\[
d_G(x_2,q_1)=\tfrac{\alpha}{2},\qquad d_G(x_2,q_2)=\pi-\tfrac{\alpha}{2}>\tfrac{\alpha}{2}.
\]
So $x_2$ is assigned to $q_1$.

\item For $x_3=(0,\pi)$:
\[
d_G(x_3,q_2)=\tfrac{\alpha}{2},\qquad d_G(x_3,q_1)=\pi-\tfrac{\alpha}{2}>\tfrac{\alpha}{2}.
\]
So $x_3$ is assigned to $q_2$.

\item For $x_4=(0,\pi+\alpha)$:
\[
d_G(x_4,q_2)=\tfrac{\alpha}{2},\qquad d_G(x_4,q_1)=\pi-\tfrac{\alpha}{2}>\tfrac{\alpha}{2}.
\]
So $x_4$ is assigned to $q_2$.
\end{itemize}

Thus the nearest--neighbor rule produces exactly the clusters $C_1$ and $C_2$.  
Each point is at distance $\frac{\alpha}{2}$ from its representative, so
\[
V_{2,2}(P;Q_A)=\frac{1}{4}\Big[4\big(\tfrac{\alpha}{2}\big)^2\Big]=\frac{\alpha^2}{4}.
\]

\medskip
\emph{Configuration B (cross pairs).}
Clusters:
\[
C'_1=\{x_1,x_4\},\qquad C'_2=\{x_2,x_3\}.
\]
Since the within–cluster separations are $\pi-\alpha<\pi$, each two–point cluster has a unique intrinsic mean at the midpoint of the shorter geodesic arc. The signed shorter difference from $0$ to $\pi+\alpha$ is $-(\pi-\alpha)$, hence
\[
q'_1=\Bigl(0,\,-\frac{\pi-\alpha}{2}\Bigr)\equiv \Bigl(0,\frac{3\pi+\alpha}{2}\Bigr)\pmod{2\pi},\qquad
q'_2=\Bigl(0,\alpha+\frac{\pi-\alpha}{2}\Bigr)=\Bigl(0,\frac{\pi}{2}+\frac{\alpha}{2}\Bigr).
\]

\underline{Nearest–neighbor check for Configuration B:}
\[
\begin{aligned}
&\text{For }x_1=(0,0): && d_G(x_1,q'_1)=\tfrac{\pi-\alpha}{2},\quad d_G(x_1,q'_2)=\tfrac{\pi}{2}+\tfrac{\alpha}{2}>\tfrac{\pi-\alpha}{2};\\
&\text{For }x_4=(0,\pi+\alpha): && d_G(x_4,q'_1)=\tfrac{\pi-\alpha}{2},\quad d_G(x_4,q'_2)=\tfrac{\pi}{2}+\tfrac{\alpha}{2}>\tfrac{\pi-\alpha}{2};\\
&\text{For }x_2=(0,\alpha): && d_G(x_2,q'_2)=\tfrac{\pi-\alpha}{2},\quad d_G(x_2,q'_1)=\tfrac{\pi}{2}+\tfrac{\alpha}{2}>\tfrac{\pi-\alpha}{2};\\
&\text{For }x_3=(0,\pi): && d_G(x_3,q'_2)=\tfrac{\pi-\alpha}{2},\quad d_G(x_3,q'_1)=\tfrac{\pi}{2}+\tfrac{\alpha}{2}>\tfrac{\pi-\alpha}{2}.
\end{aligned}
\]
Thus the nearest–neighbor rule yields exactly $C'_1$ and $C'_2$, and every point lies at distance $(\pi-\alpha)/2$ from its representative. Therefore
\[
V_{2,2}(P;Q_B)=\frac{1}{4}\Big[4\Big(\frac{\pi-\alpha}{2}\Big)^2\Big]=\frac{(\pi-\alpha)^2}{4}.
\]

\medskip
\emph{Conclusion.}  
Both $Q_A$ and $Q_B$ satisfy the nearest--neighbor condition and the centroidal condition, yet $Q_A$ yields strictly smaller distortion. Therefore, these conditions are \emph{not sufficient} to guarantee optimality.
\end{remark}

\begin{remark}
The preceding example illustrates that centroidal Voronoi configurations correspond
to stationary points of the distortion functional, but need not be globally optimal.
In particular, multiple centroidal configurations may coexist with different
distortion values, and additional structural arguments are required to identify
global minimizers.
\end{remark}
 
\begin{remark}
Theorem~\ref{thm:char} provides a purely geometric interpretation of optimal configurations: each representative minimizes the spherical
moment of inertia of its own cluster. In particular, if the support $ {X}$ is invariant under a symmetry group
$G \subset O(3)$, then one may seek $G$-invariant centroidal configurations, which often yield the global minimizers.
\end{remark}

\section{Quantization on Finite Latitudinal Rings}\label{sec4}

In this section we analyze optimal quantization for finite discrete uniform distributions supported on finitely many latitudinal rings on the unit sphere.  
For clarity of exposition, we first highlight the main structural results of the section, and then present the technical lemma and the proofs.

\begin{remark}[No cross--ring mixing]
A key structural feature underlying the results of this section is that, under the
uniform discrete measure, optimal Voronoi cells do not mix points from distinct
latitudinal rings.
This property will be established rigorously below (see Theorem~\ref{thm:component-purity} and
Theorem~\ref{thm:ring-allocation}(ii)), and it implies that the quantization problem decouples across
rings.
As a consequence, the global allocation of representatives across rings can be
formulated as a discrete optimization problem, leading naturally to the
water--filling principle described later in this section.
\end{remark}

\subsection{Discrete ring configuration}
Fix distinct latitudes $\phi_1, \phi_2, \ldots, \phi_T \in \left[-\frac{\pi}{2}, \frac{\pi}{2}\right]$ with
$\phi_1 < \phi_2 < \cdots < \phi_T < \frac{\pi}{2}$, and let $N_t \ge 1$ denote the number of uniformly spaced points on the $t$th ring.
Each ring is thus
\[
 {R}_t = \{ {x}_{t,j} = (\phi_t, \theta_{t,j}) : \theta_{t,j} = 2\pi j/N_t,\, j = 0, 1, \ldots, N_t - 1\},
\]
where $(\phi,\theta)$ denote spherical coordinates as defined in Section~2.
The total support is the disjoint union
\[
 {X} = \bigsqcup_{t=1}^T  {R}_t, \qquad M = \sum_{t=1}^T N_t,
\]
and the associated probability distribution is
\[
P = \frac{1}{M} \sum_{t=1}^T \sum_{j=1}^{N_t} \delta_{ {x}_{t,j}},
\]
which is the finite discrete uniform distribution supported on these $T$ latitudinal rings.

\subsection*{Core Structural Results}

We begin by presenting the principal theorems describing the geometry of optimal Voronoi partitions and the allocation of representatives across latitudinal rings.

\begin{theorem}[Within--ring contiguity of optimal clusters]\label{thm:within-ring-contiguity-simple-index}
Fix a latitude $\phi\in(-\frac{\pi}{2},\frac{\pi}{2})$ and consider the ring of $N$ equally spaced points
\[
R_\phi=\{(\phi,\theta_j):\ \theta_j = 2\pi j/N,\ j=0,1,\ldots,N-1\}.
\]
Let $Q=\{q_1,\dots,q_n\}\subset \D S^2$ be an optimal set of $n$--means, and for each $q_i\in Q$ let $V_i$ denote its Voronoi region. Then for every $i$, the set of ring points assigned to $q_i$, namely
\[
X_i := R_\phi \cap V_i,
\]
is (if nonempty) a single contiguous block of consecutive longitudes on the ring (in cyclic order).
\end{theorem}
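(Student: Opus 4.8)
\medskip
\noindent\textbf{Proof strategy.}
The plan is to reduce the statement to a rearrangement inequality for the single--representative cost
\[
E(S)\ :=\ \min_{q\in\D S^2}\ \sum_{x\in S}d_G(x,q)^2 ,\qquad S\subseteq R_\phi\ \text{finite},
\]
and then to feed in the centroidal characterization of Theorem~\ref{thm:char}. The only metric fact needed about the ring is that, for a fixed $q=(\psi,\beta)$ and a ring point $x=(\phi,\theta)$,
\[
\cos d_G(x,q)=\sin\phi\sin\psi+\cos\phi\cos\psi\,\cos(\theta-\beta),
\]
which is affine and, when $q$ is not a pole, strictly decreasing in $\cos(\theta-\beta)$; consequently every sub--level set $\{x\in R_\phi:\ d_G(x,q)\le r\}$ is a single contiguous block of consecutive ring points (and all of $R_\phi$ when $q$ is a pole).

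\medskip
\noindent\emph{Step 1: contiguous arcs minimize $E$.}
I would first prove that among all $m$--element subsets of $R_\phi$ a contiguous arc $A_m$ of $m$ consecutive ring points minimizes $E$, and that $E(S)>E(A_m)$ strictly whenever $S$ is not contiguous (all such $A_m$ sharing one value by the rotational symmetry of $R_\phi$). The mechanism is a one--point swap. Fix a cost--minimizing representative $q_S$ for $S$, put $r=\max_{x\in S}d_G(x,q_S)$, attained at some $x^{\max}\in S$. If $q_S$ is not a pole, then by the sub--level--set fact $B=\{x\in R_\phi:d_G(x,q_S)\le r\}$ is a block containing $S$; when $S$ is not a block, $B\setminus S$ must contain a ring point $x^-$ strictly interior to $B$, so $d_G(x^-,q_S)<r$, and replacing $x^{\max}$ by $x^-$ strictly lowers $\sum_x d_G(x,q_S)^2$, hence lowers $E$. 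If $q_S$ is a pole, then $d_G(\,\cdot\,,q_S)\equiv\tfrac{\pi}{2}-|\phi|$ on the ring and $E(S)=m\big(\tfrac{\pi}{2}-|\phi|\big)^2$ for every $m$--subset, while a contiguous arc is not rotationally balanced, so the pole is not a stationary point of its cost and $E(A_m)<m\big(\tfrac{\pi}{2}-|\phi|\big)^2$. In both cases a non--contiguous $S$ is not $E$--minimal; since an $E$--minimizer exists among the finitely many $m$--subsets, the minimum equals $E(A_m)$ and the claimed strict inequality follows.

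\medskip
\noindent\emph{Step 2: global rearrangement.}
Let $Q=\{q_1,\dots,q_n\}$ be an optimal set of $n$--means with clusters $X_i=R_\phi\cap V_i$ and $m_i=|X_i|$. By Theorem~\ref{thm:char}, $q_i$ minimizes $\sum_{x\in X_i}d_G(x,q)^2$ and the partition is nearest--neighbor, so $V_{n,2}(P;Q)=\tfrac1N\sum_i E(X_i)=V_{n,2}(P)$. Suppose some $X_a$ is nonempty but not a contiguous block. Place disjoint contiguous arcs $Y_1,\dots,Y_n$ with $|Y_i|=m_i$ consecutively around $R_\phi$ (possible since $\sum_i m_i=N$), and let $q_i'$ minimize $\sum_{x\in Y_i}d_G(x,q)^2$. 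By Step~1, $E(Y_i)\le E(X_i)$ for all $i$ and $E(Y_a)<E(X_a)$; hence $V_{n,2}(P;Q')\le\tfrac1N\sum_i E(Y_i)<\tfrac1N\sum_i E(X_i)=V_{n,2}(P)$ for $Q'=\{q_1',\dots,q_n'\}$, contradicting optimality. Therefore every cluster $X_i$ is a contiguous block of consecutive longitudes, as asserted.

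\medskip
\noindent\emph{Where the difficulty lies.}
I expect Step~1 to be the crux, with two delicate points. First, one must extract an \emph{interior} witness $x^-$ with the \emph{strict} inequality $d_G(x^-,q_S)<r$: the borderline case is that the only ring points in $B\setminus S$ sit exactly at the two endpoints of the arc $B$, but one checks that this already forces $S$ to be a contiguous block, contrary to hypothesis, so it cannot occur. Second, the polar case $q_S\in\{\text{poles}\}$ makes the sub--level set all of $R_\phi$, so the swap no longer lowers the cost for $q_S$ fixed, and the rotational--balance computation must take over. Apart from this, the argument uses only Theorem~\ref{thm:char} and the standing assumption that each cluster lies in a geodesic ball of radius $<\pi/2$, which is what makes ``representative $=$ intrinsic mean of its cluster'' available.
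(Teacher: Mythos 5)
Your proof is correct but follows a genuinely different route from the paper's. The paper first reduces to the case where every representative serving ring points lies on the latitude circle itself (via a reflection/midpoint argument), then invokes the strict convexity of the longitudinal cost $f_\phi$ (Lemma~\ref{longitudinalcost}) through Corollary~\ref{cor:ring-boundary} to show that the bisector of two on--ring representatives crosses the ring exactly once per arc, and finally derives contiguity from a local sign--change contradiction. You bypass the convexity lemma and the reduction-to-ring step entirely, replacing them by a combinatorial exchange argument: a one--point swap shows that among all $m$--element subsets of $R_\phi$, contiguous arcs uniquely minimize $E(S)=\min_q\sum_{x\in S}d_G(x,q)^2$, and then a global rearrangement of the whole partition contradicts optimality. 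The only metric input you need is that sub--level sets of $d_G(\cdot,q)$ on the ring are cyclic blocks (monotonicity of $\cos d_G$ in $\cos(\theta-\beta)$), which is weaker than strict convexity of $f_\phi$. Your approach has the advantage of yielding the rearrangement inequality $E(Y)\le E(S)$ for free, which feeds directly into the allocation and block--length claims in Theorem~\ref{thm:ring-allocation}(i); the paper's approach, while more technical, produces the sharper local description of Voronoi boundaries along the ring (useful elsewhere). Two small cautions: your Step~2 identity $\sum_i m_i=N$ presumes $P$ is the uniform measure on the single ring $R_\phi$ and that Voronoi ties are broken so the clusters truly partition $R_\phi$; if the theorem is read with the multi--ring $P$ of Section~\ref{sec4} in mind, the rearrangement argument needs to be preceded by cross--ring purity, which is established only afterward. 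Also, in Step~1 the borderline case $B=R_\phi$ (the sub--level set swallowing the whole ring) is slightly different from the "two endpoints" case you flagged, though it too forces $S$ to be a block because at most two (necessarily adjacent) ring points realize the maximal distance; you may want to make that observation explicit.
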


\begin{theorem}[Cluster Purity for Separated Components]\label{thm:component-purity}
Let $X=\bigsqcup_{t=1}^{T} C_t \subset \mathbb{S}^2$ be a finite set decomposed into pairwise disjoint nonempty components $C_t$. 
Assume that for each $t$ there exists a point $p_t \in \mathbb{S}^2$ and a radius $r>0$ such that
\begin{equation}\label{eq:ball-separation}
C_t \subset B(p_t,r)
\quad \text{and} \quad
B(p_s,r) \cap B(p_t,r) = \varnothing \ \text{ for all } s \neq t,
\end{equation}
where
\[
B(p_t,r) := \{x \in \mathbb{S}^2 : d_G(x,p_t) < r\}
\]
denotes the open geodesic ball of radius $r$ centered at the point $p_t$ on the sphere. 
\emph{Note that $p_t$ lies on the surface of $\mathbb{S}^2$, and distances are measured intrinsically along great–circle arcs.}

Let $P$ be the uniform probability measure on $X$, and let $Q^*=\{q_1^*,\dots,q_n^*\}$ be an optimal set of $n$–means for $P$. Then each Voronoi region $V(q_j^*)$ intersects at most one component $C_t$. Consequently, no optimal Voronoi cell contains points from two different components.
\end{theorem}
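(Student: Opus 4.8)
The plan is to argue by contradiction: suppose some optimal representative $q_j^*$ has a Voronoi region $V(q_j^*)$ that captures points from two distinct components, say $x \in C_s$ and $y \in C_t$ with $s \neq t$. The strategy is to show that we can strictly decrease the distortion by splitting the ``mixed'' cluster, which contradicts optimality. The key geometric input is the ball–separation hypothesis \eqref{eq:ball-separation}, which forces any two points lying in different components to be far apart — specifically, $d_G(x,y) \ge$ (some positive gap depending on the minimal separation between the balls $B(p_s,r)$), while any point and the center $p_t$ of its own component's ball satisfy $d_G(x,p_t) < r$.

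First I would set up the counting/pigeonhole skeleton. Let $k_t$ denote the number of representatives of $Q^*$ whose Voronoi cell meets $C_t$; since every support point must be served and the components are disjoint, if some cell is ``impure'' then summing shows $\sum_t k_t$ would need to exceed $n$ unless at least one component $C_{t_0}$ is served by \emph{zero} dedicated representatives and relies entirely on impure cells — or more carefully, the pigeonhole gives a component $C_{t_0}$ together with the fact that moving a representative to ``cover'' $C_{t_0}$ locally is cheaper than the current mixed assignment. Concretely, I would compare two configurations: the current optimal $Q^*$, and a perturbed configuration obtained by relocating one representative that currently sits in an impure cell to the center (or intrinsic mean) of the underserved component. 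The second step is the distortion estimate. The contribution of a point $x \in C_{t}$ to the distortion under $Q^*$, if $x$ is served by an impure cell, is at least roughly (distance from $x$ to the ``wrong side'' center) which is bounded below using the separation; after relocation, $x$'s contribution drops to at most (diameter of $C_t$)$^2 \le (2r)^2$. Choosing $r$ small relative to the inter-ball gap — which is exactly what \eqref{eq:ball-separation} provides, since disjointness of balls of radius $r$ means centers are more than $2r$ apart — makes the swap strictly improving.

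The cleanest version of the argument, and the one I would actually write, avoids delicate global bookkeeping by localizing: take the impure cell $V(q_j^*)$ containing both $x\in C_s$ and $y\in C_t$. The representative $q_j^*$ must satisfy $d_G(q_j^*,x) \le d_G(q_j^*, q_k^*$-distances$)$ etc., but more usefully, by the triangle inequality $d_G(x,y) \le d_G(x,q_j^*) + d_G(q_j^*,y)$, so at least one of $d_G(x,q_j^*), d_G(q_j^*,y)$ is $\ge \tfrac12 d_G(x,y) \ge \tfrac12 \cdot (\text{gap})$. Now use Theorem~\ref{thm:char}: $q_j^*$ is the intrinsic (Karcher) mean of its cluster $X_j^*$, which contains points from at least two components separated by more than the gap, so $X_j^*$ cannot be contained in a geodesic ball of radius $< \pi/2$ unless the gap is small — but even granting that, the squared-distance sum $\sum_{x\in X_j^*} d_G(x, q_j^*)^2$ is bounded below by a quantity proportional to the separation. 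Replacing $q_j^*$ by \emph{two} points, one the Karcher mean of $X_j^* \cap B(p_s,r)$-type pieces and one for the rest, and compensating by removing a redundant representative elsewhere (which must exist by pigeonhole, since $n$ representatives serving $T$ well-separated clusters with one cell impure leaves a surplus), yields a strictly smaller distortion.

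The main obstacle I anticipate is the bookkeeping needed to guarantee that a ``redundant'' representative genuinely exists so that the split-and-rebalance move keeps the total count $\le n$ — in the edge case $n < T$ this requires care, since then \emph{some} cells are unavoidably forced to straddle components, and the theorem as stated would be false without an implicit assumption $n \ge T$ or a reading in which ``at most one component'' is automatic. I would resolve this by noting that when $n \ge T$ the pigeonhole surplus argument is clean, and when $n < T$ the statement should be interpreted (or the hypotheses strengthened) accordingly; most likely the intended regime is $n \ge T$, so the write-up would either state that hypothesis explicitly or observe that with fewer representatives than components the separation $r$ can be taken small enough that any optimal cell still cannot profitably span two balls whose mutual distance exceeds the in-ball diameter — i.e., the purity conclusion degrades gracefully. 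The secondary technical point is verifying the Karcher-mean lower bound $\sum_{x \in A} d_G(x,q)^2 \ge c\cdot(\mathrm{diam}\,A)^2$ for sets $A$ straddling two balls; this follows from the fact that the two ``lobes'' of $A$ pull the minimizing $q$ in opposing directions, and a short convexity/variance argument along the connecting geodesic gives the needed constant.
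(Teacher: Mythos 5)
Your final argument differs genuinely from the paper's. The paper's surgery is a \emph{one-for-one swap}: writing $A := C_s \cap V(q^*)$ and $B := C_t \cap V(q^*)$, it sets $q_s$ and $q_t$ to be the Karcher means of $A$ and $B$ separately, observes that $q_s\in B(p_s,r)$ and $q_t\in B(p_t,r)$ lie in disjoint balls so not both Karcher minimization inequalities can be equalities, picks $q^\dagger\in\{q_s,q_t\}$ with smaller total cost over $A\cup B$, and replaces $q^*$ by $q^\dagger$ to get $\widetilde Q = (Q^*\setminus\{q^*\})\cup\{q^\dagger\}$ with $|\widetilde Q|=n$ and, it claims, strictly smaller distortion. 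Your proposed surgery is a \emph{two-for-one split}: replace $q_j^*$ by the two Karcher means of the sub-pieces, then remove a redundant representative elsewhere found by pigeonhole. The paper's move is the more parsimonious of the two (no redundant representative needs to exist), but it is also the one whose key step is more fragile: by the centroidal necessity condition (Theorem~\ref{thm:char}), the original $q^*$ is itself the Karcher mean of $A\cup B$, so $\sum_{u\in A\cup B}d_G(u,q^*)^2$ is the \emph{global minimum} of that functional over all $q\in\mathbb{S}^2$; one cannot make that particular sum strictly smaller by substituting any single point $q^\dagger$. Your instinct to go to a two-point replacement, followed by a counting argument to restore the budget, is therefore the more defensible mechanism for actually decreasing distortion. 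What your sketch still owes, and acknowledges, is the pigeonhole bookkeeping that guarantees a redundant representative and the treatment of the $n<T$ regime; that concern is real, since for $n<T$ (e.g.\ $n=1$, $T=2$) the purity conclusion is unattainable, an edge case the paper's statement does not exclude and its proof does not examine. In short: you chose a different decomposition of the local-improvement argument, your version is closer to what a correct proof would need, but it remains a sketch missing the global counting step.
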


\begin{remark}
The condition $C_t \subset B(p_t,r)$ means that each component lies inside a small spherical patch around $p_t$ on the surface of the sphere. Since geodesic distance is used, $B(p_t,r)$ should be viewed as a curved region on $\mathbb{S}^2$, not a three–dimensional ball. The separation condition $B(p_s,r)\cap B(p_t,r)=\varnothing$ ensures that the components are well separated on the sphere. Intuitively, points in one such patch are always closer to a centre placed within that patch than to any centre located near another patch, making it suboptimal for a Voronoi region to contain points from two different components.
\end{remark}

\begin{theorem}[Ring partition and allocation principle]\label{thm:ring-allocation}
Let $P$ be the discrete uniform distribution supported on $T$ latitudinal rings 
$R_t=\{(\phi_t,\theta_{t,j}) : \theta_{t,j}=2\pi j/N_t,\ j=0,\dots,N_t-1\}$ with distinct latitudes 
$\phi_1<\cdots<\phi_T$, and let $Q^*$ be an optimal set of $n$--means for $P$. Then:

\smallskip
\noindent{\rm (i) \textbf{Within--ring contiguity and midpoint representatives.}}
For each $t$, the subset $R_t$ is partitioned by $Q^*$ into $k_t\ge 0$ contiguous blocks of longitudes (in cyclic order). 
Each nonempty block has length either $\lfloor N_t/k_t\rfloor$ or $\lceil N_t/k_t\rceil$, and its representative $q\in Q^*$ lies on the latitude $\phi_t$ at the block’s mid–longitude.

\smallskip
\noindent{\rm (ii) \textbf{No cross–ring mixing.}}
Every optimal Voronoi cluster $X_j(Q^*)$ is contained in a single ring $R_t$; equivalently, no Voronoi region intersects two distinct rings.

\smallskip
\noindent{\rm (iii) \textbf{Discrete water–filling allocation.}}
Let $E_t(k)$ denote the minimal within–ring distortion on $R_t$ when exactly $k$ representatives serve $R_t$ 
(with the contiguity/midpoint structure from {\rm (i)}). Then $E_t$ is strictly decreasing and discretely convex in $k$, and the global problem
\[
\min_{k_1,\dots,k_T\in\mathbb{Z}_{\ge 0}}\ \sum_{t=1}^T E_t(k_t)
\qquad\text{subject to}\qquad \sum_{t=1}^T k_t=n
\]
is solved by successively assigning each additional representative to the ring that yields the largest marginal drop 
$\Delta_t(k):=E_t(k)-E_t(k+1)$ (a discrete water–filling rule).
\end{theorem}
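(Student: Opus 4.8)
\textbf{Proof plan for Theorem~\ref{thm:ring-allocation}.}
The plan is to prove the three parts in the order (ii) $\Rightarrow$ (i) $\Rightarrow$ (iii), since the no–cross–ring–mixing statement is the structural backbone that makes the rest a one–dimensional problem, while (iii) is then a clean reduction to discrete optimization. For part (ii), I would invoke Theorem~\ref{thm:component-purity}: the key is to check that $T$ distinct latitudinal rings satisfy the ball–separation hypothesis~\eqref{eq:ball-separation}. Each ring $R_t$ lies on the latitude circle $\phi=\phi_t$, which is contained in the geodesic ball $B(p_t,r_t)$ centered at the pole–side point $p_t=(\tfrac{\pi}{2}\operatorname{sgn}(\phi_t),\cdot)$ — or more symmetrically, in a ball centered at a point on the axis through the ring's plane — of radius the polar angle to that ring; one then notes the rings are at distinct latitudes, so choosing a common small $r$ (less than half the minimal latitudinal gap $\min_t|\phi_{t+1}-\phi_t|$, using that geodesic distance between points on different latitude circles is bounded below by the latitude difference) gives disjoint balls. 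A subtlety: the balls must be centered \emph{on} $\mathbb S^2$, so I would center $B(p_t,r)$ at the point $(\phi_t,0)$ on the ring itself and take $r$ slightly larger than the ring's own geodesic diameter $2\arccos(\cos^2\phi_t\cos(\pi/N_t)+\sin^2\phi_t)$ but still smaller than the distance to neighboring rings; a short computation with the geodesic distance formula~\eqref{eq:geodesic-distance} confirms both requirements can be met simultaneously when latitudes are distinct. Theorem~\ref{thm:component-purity} then gives (ii) verbatim.

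For part (i), once (ii) is known the problem decouples: the $k_t$ representatives whose Voronoi cells meet $R_t$ must, by (ii), have those cells meeting \emph{only} $R_t$. Within–ring contiguity is exactly Theorem~\ref{thm:within-ring-contiguity-simple-index} applied to the ring $R_t$ (the optimal $n$–means restricted to serving $R_t$ form an optimal $k_t$–means configuration for the uniform measure on $R_t$, by the decoupling). For the block–length claim $\lfloor N_t/k_t\rfloor$ or $\lceil N_t/k_t\rceil$ and the midpoint–representative claim, I would argue: each block of $m$ consecutive equally spaced points on the latitude circle $\phi_t$ sits inside a geodesic ball of radius $<\pi/2$ (true since the whole ring has geodesic diameter $<\pi$ when $\phi_t\neq 0$, and for $\phi_t=0$ one needs each block to span less than a half–circle, which holds once $k_t\ge 2$; the $k_t=1$ equatorial case must be handled by noting any single point works as a degenerate "mean" or excluded by optimality), so the Karcher mean exists uniquely and, by the reflection symmetry of the block about its mid–longitude meridian, must lie on that meridian at latitude $\phi_t$. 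The block lengths differing by at most one follows from a standard exchange argument: if two blocks differed by two or more, moving one point from the larger to the smaller block strictly decreases total squared geodesic distortion (this uses that on a fixed latitude circle the one–dimensional distortion $E(m)$ for an $m$–point contiguous block served by its midpoint is strictly convex in $m$ — essentially the Euclidean $\sum (j-\bar\jmath)^2$ calculation rescaled by the chord/arc relation on the circle of radius $\cos\phi_t$).

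For part (iii), the decoupling from (ii) gives $V_{n,2}(P;Q^*)=\frac1M\sum_{t=1}^T \big(\text{distortion of }R_t\text{ served by its }k_t\text{ reps}\big)$, and by (i) the minimal such contribution is $E_t(k_t)$; hence the optimal allocation solves $\min\sum_t E_t(k_t)$ subject to $\sum_t k_t=n$. It remains to establish that $E_t$ is strictly decreasing and discretely convex, i.e. $\Delta_t(k)=E_t(k)-E_t(k+1)>0$ and $\Delta_t(k)$ is nonincreasing in $k$. Strict monotonicity is clear (more representatives never hurt, and strictly help until $k=N_t$ when distortion hits $0$). Discrete convexity is the crux: with $k$ representatives the optimal block structure has $r:=N_t\bmod k$ blocks of size $\lceil N_t/k\rceil$ and $k-r$ of size $\lfloor N_t/k\rfloor$, so $E_t(k)=\tfrac{1}{M}\big[r\,\psi_t(\lceil N_t/k\rceil)+(k-r)\,\psi_t(\lfloor N_t/k\rfloor)\big]$ where $\psi_t(m)$ is the midpoint distortion of an $m$–block on latitude $\phi_t$; one computes $\psi_t(m)$ in closed form (it reduces, via $d_G$ between two points on the same latitude, to a sum $\sum \arccos(\cdots)^2$ that for the contiguous equally–spaced configuration telescopes to an explicit expression — this yields the "closed–form distortion formulas" promised in the abstract), and then checks that $k\mapsto E_t(k)$ has nonincreasing increments. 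The cleanest route is to observe that $E_t(k)$ is the lower convex envelope evaluation of a convex sequence: partitioning $N_t$ points into $k$ balanced contiguous blocks and summing a convex per–block cost $\psi_t$ is a classic instance where the optimal value is convex in $k$ (a min–cost–flow / Monge argument, or directly: $E_t(k-1)+E_t(k+1)\ge 2E_t(k)$ by a pairing/averaging of the block structures at $k-1$ and $k+1$). Once convexity and monotonicity of $E_t$ are in hand, the greedy water–filling rule — repeatedly award the next representative to $\arg\max_t \Delta_t(k_t)$ — is the standard exchange–argument proof that a greedy algorithm solves a separable concave–marginal resource allocation problem: any allocation with $k_t$ not produced by greedy admits a swap (move a unit from a ring with small marginal gain to one with larger gain) that does not increase the objective, and iterating reaches the greedy solution.

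\textbf{Main obstacle.} The routine parts are (ii) (pure citation plus a geodesic–distance estimate) and the water–filling argument in (iii) (standard once convexity is known). The real work is the \emph{discrete convexity of} $E_t$: one must control how the balanced block sizes $\lfloor N_t/k\rfloor,\lceil N_t/k\rceil$ interact with the nonlinear, curvature–dependent per–block cost $\psi_t(m)=\psi_t(m;\phi_t)$ coming from squared \emph{geodesic} (not chordal) distances on a latitude circle — the $\arccos$ in $d_G$ means $\psi_t$ is not simply a quadratic, so convexity in the block size $m$ and then in the block count $k$ needs a genuine monotonicity estimate on the second differences of $m\mapsto\psi_t(m)$, uniformly in $\phi_t\in(-\pi/2,\pi/2)$. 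I expect this to be the step requiring the most care; a possible simplification is to prove convexity first for the chordal surrogate (where $\psi_t$ is exactly quadratic and the classical balanced–partition convexity applies directly) and then transfer to the geodesic cost using that $t\mapsto t^2$ and $u\mapsto \arccos(1-u)$ are both increasing and suitably convex/concave on the relevant range, so that the ordering of second differences is preserved.
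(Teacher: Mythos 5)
Your proposed route for part~(ii) has a genuine gap, and it is the decisive one: you want to deduce no--cross--ring mixing by checking that the rings $R_1,\dots,R_T$ satisfy the ball--separation hypothesis of Theorem~\ref{thm:component-purity}, but latitudinal rings \emph{do not} satisfy it in general. A ring at latitude $\phi$ has geodesic diameter $\pi-2|\phi|$ (attained between the two points with longitude difference $\pi$; a direct check with~\eqref{eq:geodesic-distance} gives $\arccos(-\cos 2\phi)=\pi-2|\phi|$). Hence any geodesic ball $B(p_t,r)$ on $\D S^2$ containing $R_t$ must have $r\ge(\pi-2|\phi_t|)/2$, which for rings near the equator is close to $\pi/2$, and for the equatorial ring itself is exactly $\pi/2$. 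If another ring lies at a nearby latitude, the two required balls necessarily overlap, so the hypothesis $B(p_s,r)\cap B(p_t,r)=\varnothing$ fails no matter how you choose the centers $p_t$; your ``slightly larger than the ring's diameter but smaller than the latitude gap'' proposal cannot be met simultaneously, since the latitude gap $\min_t|\phi_{t+1}-\phi_t|$ can be arbitrarily small while the ring diameters stay of order $\pi$. Theorem~\ref{thm:component-purity} is therefore not the right tool here. The paper instead proves (ii) by a \emph{direct} exchange argument tailored to rings: given a cross--ring cell $V(q^*)$ meeting $R_s$ and $R_t$, form the Karcher means $q_s,q_t$ of the two pieces (each lies on its own latitude at the block mid--longitude, by the within--ring structure from part~(i) and Lemma~\ref{longitudinalcost}); since $q_s\ne q_t$ by uniqueness and they sit on distinct latitudes, $q^*$ cannot be the exact Karcher mean of both pieces, and replacing $q^*$ by the better of $q_s,q_t$ strictly improves. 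Note this also forces the proof order to be (i) before (ii), the reverse of your plan: the paper's (ii) argument leans on the per--block Karcher--mean characterization, which is part of (i).

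For part~(i) your reasoning (contiguity from Theorem~\ref{thm:within-ring-contiguity-simple-index}, mid--longitude from the reflection symmetry of the block, balanced block lengths from convexity) matches the paper in substance; the paper derives the mid--longitude claim from evenness and strict convexity of $f_{\phi_t}$ in Lemma~\ref{longitudinalcost} rather than a symmetry argument, but these are equivalent. You are also right to flag the equatorial $k_t=1$ case as a place where the Karcher--mean uniqueness breaks (the standing $\pi/2$--ball assumption in Section~\ref{sec2} does not cover a whole equator in one cluster); the paper glosses over this as well.

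For part~(iii) you correctly identify discrete convexity of $E_t$ as the crux. Your pairing/averaging sketch for $E_t(k-1)+E_t(k+1)\ge 2E_t(k)$, and your idea of proving convexity for a chordal surrogate and transferring, are genuinely different from the paper's route: the paper works directly with the geodesic cost, shows the per--block cost $\Phi_t(L)$ is discretely convex in $L$ because $f_{\phi_t}$ is strictly convex (Lemma~\ref{longitudinalcost}), argues that splitting a larger block saves more distortion than splitting a smaller one, and deduces that the marginal drop $\Delta_t(k)$ is decreasing; the greedy/exchange argument that follows is the same standard separable--allocation argument you describe. Your chordal--surrogate transfer would require a careful comparison of second differences through $\arccos$, which as you note is delicate; the paper's direct convexity route avoids that extra layer and is the cleaner path.
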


The above theorems summarize the essential geometric and combinatorial structure of optimal quantizers for latitudinally organized spherical data.

We now collect the technical lemma needed for the proofs of the core results.

\begin{lemma}[Convexity of longitudinal cost] \label{longitudinalcost}
Fix $\phi\in(-\tfrac{\pi}{2},\tfrac{\pi}{2})$ and define
\[
f_\phi(\Delta\theta)\;:=\; d_G\!\big((\phi,0),(\phi,\Delta\theta)\big)^2,
\qquad \Delta\theta\in[0,\pi].
\]
Then $f_\phi$ is an even function of $\Delta\theta$, and strictly convex on $(0,\pi)$.
\end{lemma}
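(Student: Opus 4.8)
The plan is to reduce the claim to an explicit single-variable computation, using the closed-form expression for geodesic distance along a fixed latitude. For two points on the ring at latitude $\phi$, formula \eqref{eq:geodesic-distance} gives $d_G((\phi,0),(\phi,\Delta\theta)) = \arccos\!\big(\sin^2\phi + \cos^2\phi\cos(\Delta\theta)\big)$. Writing $c=\cos^2\phi\in(0,1]$ (note $c>0$ since $\phi\in(-\tfrac{\pi}{2},\tfrac{\pi}{2})$) and using $\sin^2\phi = 1-c$, the argument of $\arccos$ becomes $1-c+c\cos(\Delta\theta) = 1 - c(1-\cos\Delta\theta) = 1 - 2c\sin^2(\Delta\theta/2)$. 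Hence
\[
f_\phi(\Delta\theta) = \big(\arccos\!\big(1 - 2c\sin^2(\Delta\theta/2)\big)\big)^2 .
\]
Evenness in $\Delta\theta$ is immediate since only $\cos\Delta\theta$ (equivalently $\sin^2(\Delta\theta/2)$) enters, and $\cos$ is even. So the substance of the lemma is strict convexity on $(0,\pi)$.

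For the convexity, I would introduce the substitution $u = \Delta\theta/2 \in (0,\pi/2)$ and set $g(u) := \arccos(1 - 2c\sin^2 u)$, so that $f_\phi(\Delta\theta) = g(\Delta\theta/2)^2$. One checks that $1 - 2c\sin^2 u = \cos(2u)$ when $c=1$ (the equatorial case), giving $g(u)=2u$ and $f_\phi(\Delta\theta)=(\Delta\theta)^2$, which is strictly convex; this is a useful sanity check. In general I would compute $g'$ and $g''$ directly: differentiating $\cos g = 1-2c\sin^2 u$ gives $-\sin(g)\,g' = -2c\sin(2u) = -4c\sin u\cos u$, so $g'(u) = \dfrac{4c\sin u\cos u}{\sin g(u)} > 0$ on $(0,\pi/2)$ (the denominator $\sin g$ is positive because $g(u)\in(0,\pi)$ there). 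Then for $h := g^2$ we have $h' = 2gg'$ and $h'' = 2(g')^2 + 2gg''$. Since $g>0$ and $g'>0$ on $(0,\pi/2)$, it suffices to show $g''\ge 0$ there (strict convexity of $h$ then follows from the strictly positive term $2(g')^2$); and because $f_\phi(\Delta\theta) = h(\Delta\theta/2)$, convexity in $u$ transfers to convexity in $\Delta\theta$ by the affine change of variables, with strictness preserved.

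It remains to verify $g''(u)\ge 0$ on $(0,\pi/2)$. The cleanest route is to write $g' = 4c\,\psi(u)$ with $\psi(u) = \dfrac{\sin u\cos u}{\sin g(u)}$ and show $\psi$ is nondecreasing; equivalently, show that $\dfrac{\sin^2 u\cos^2 u}{\sin^2 g(u)} = \dfrac{\sin^2 u\cos^2 u}{1-(1-2c\sin^2 u)^2}$ is nondecreasing in $u$. The denominator simplifies: $1-(1-2c\sin^2 u)^2 = 4c\sin^2 u\,(1 - c\sin^2 u)$, so the ratio equals $\dfrac{\cos^2 u}{4c\,(1-c\sin^2 u)} = \dfrac{1-s}{4c\,(1-cs)}$ where $s=\sin^2 u\in(0,1)$ is increasing in $u$. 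Thus I need $s\mapsto \dfrac{1-s}{1-cs}$ to be monotone on $(0,1)$: its derivative in $s$ is $\dfrac{-(1-cs) + c(1-s)}{(1-cs)^2} = \dfrac{c-1}{(1-cs)^2} \le 0$, with equality only when $c=1$. Hence $\psi^2$, and therefore $\psi$ (being positive), is nonincreasing in $u$ when $c<1$ — so $g'$ is nonincreasing and $g''\le 0$, not $\ge 0$. This sign reversal means the naive bound fails, and the genuine obstacle is that $g$ itself is concave for $c<1$; convexity of $h=g^2$ must come from the competition between $2(g')^2>0$ and $2gg''<0$. So the main work — and the step I expect to be the real obstacle — is a quantitative estimate showing $(g'(u))^2 + g(u)g''(u) > 0$ on $(0,\pi/2)$, i.e. that $g$ does not become "too concave." I would handle this by deriving from $\sin(g)g' = 2c\sin(2u)$ a second relation (differentiate again: $\cos(g)(g')^2 + \sin(g)g'' = 4c\cos(2u)$), then eliminate $g''$ to express $(g')^2 + gg''$ purely in terms of $u$, $g(u)$, and $c$, and finally prove positivity of the resulting expression using elementary inequalities for $\arccos$ (for instance the bounds $t \le \arccos(1-2\sin^2(t/2)\,) $-type comparisons, or convexity/monotonicity of $x\mapsto \arccos(1-x)$). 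A clean alternative is to use the known fact that on a circle of radius $\rho=\cos\phi$ the squared chordal distance is $4\rho^2\sin^2(\Delta\theta/2)$ and the squared geodesic (great-circle) distance is a smooth increasing convex function of the squared chordal distance on the relevant range, then compose with the convex increasing map $y\mapsto(\arccos(1-y))^2$; whichever presentation is shorter, the crux is the one-variable convexity estimate just described.
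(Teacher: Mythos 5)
Your reduction is sound up to the point where you correctly identify the central difficulty: writing $f_\phi=\sigma^2$ (your $g^2$ after the $u=\Delta\theta/2$ substitution), one has
\[
f_\phi'' \;=\; 2\big((\sigma')^2+\sigma\sigma''\big),
\]
and since $\sigma$ itself is \emph{concave} off the equator ($\sigma''<0$ for $c<1$), positivity of $f_\phi''$ cannot be inferred termwise — the positive $(\sigma')^2$ has to beat the negative $\sigma\sigma''$. You diagnose this exactly right, and your parametrization (using $c=\cos^2\phi$ rather than the paper's $c=\cos\phi$, a cosmetic difference) and the computation leading to $g'' \le 0$ are both correct.

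However, the proof stops precisely where the genuine work begins. You write that one should eliminate $g''$, express $(g')^2+gg''$ in terms of $u$, $g(u)$, $c$, and "prove positivity of the resulting expression using elementary inequalities for $\arccos$" — but you never carry this out, and it is exactly this step that constitutes the lemma. The paper's proof does the corresponding algebra and arrives at the explicit identity
\[
f''_\phi(\Delta\theta)
=\frac{2c^{4}\sin^{2}\Delta\theta}{\sin^{3}\sigma}\big(\sin\sigma - \sigma\cos\sigma\big)
+\frac{2c^{2}\sigma\cos\Delta\theta\,\sin^{2}\sigma}{\sin^{3}\sigma},
\]
at which point the key elementary inequality $\sin\sigma - \sigma\cos\sigma>0$ on $(0,\pi)$ (equivalently, $t\mapsto\sin t/t$ strictly decreasing) enters. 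That identity and that inequality are the decisive content, and your write-up does not reach them. The "clean alternative" you float at the end — composing the chordal distance with $y\mapsto(\arccos(1-y))^2$ — also does not close the gap: convexity of $y\mapsto(\arccos(1-y))^2$ on the relevant range composed with the concave map $\Delta\theta\mapsto 4\rho^2\sin^2(\Delta\theta/2)$ does not automatically yield convexity of the composition, so one is back to the same one-variable estimate. As it stands, the proposal is a correct setup and a correct diagnosis of the obstacle, but it is not a proof of strict convexity; the indispensable positivity estimate is left as a plan.
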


\begin{proof}
\emph{Evenness.} By the spherical law of cosines,
\[
d_G\!\big((\phi,0),(\phi,\Delta\theta)\big)
= \arccos\!\Big(\sin^2\phi+\cos^2\phi\cos\Delta\theta\Big).
\]
Since $\cos(\Delta\theta)$ is even in $\Delta\theta$, the argument of $\arccos$ is even, hence
$d_G\big((\phi,0),(\phi,\Delta\theta)\big)$ is even, and so is $f_\phi(\Delta\theta)$.

\medskip
\emph{Strict convexity.} Set $c:=\cos\phi\in(0,1]$ and write
\[
\sigma(\Delta\theta)
\;:=\; d_G\!\big((\phi,0),(\phi,\Delta\theta)\big).
\]
Using the identity $\cos\sigma
=\sin^2\phi+\cos^2\phi\cos\Delta\theta
=1-2c^2\sin^2(\Delta\theta/2)$, we obtain the convenient form
\[
\sigma(\Delta\theta)\;=\;2\arcsin\!\big(c\,\sin(\Delta\theta/2)\big),\qquad \Delta\theta\in[0,\pi].
\]
Hence
\[
f_\phi(\Delta\theta)\;=\;\sigma(\Delta\theta)^2.
\]
We will prove $f_\phi''(\Delta\theta)>0$ for $\Delta\theta\in(0,\pi)$.

Let $w(\Delta\theta):=c\,\sin(\Delta\theta/2)$ and $D(\Delta\theta):=\sqrt{1-w(\Delta\theta)^2}
=\sqrt{1-c^2\sin^2(\Delta\theta/2)}$. Then, by differentiation,
\[
\sigma'(\Delta\theta)
=\frac{c\cos(\Delta\theta/2)}{D(\Delta\theta)},
\qquad
\sigma''(\Delta\theta)
=-\,\frac{c(1-c^2)\,\sin(\Delta\theta/2)}{2\,D(\Delta\theta)^3}.
\]
(These follow from $\sigma=2\arcsin w$, so $\sigma'=2w'/\sqrt{1-w^2}$ with
$w'= \tfrac{c}{2}\cos(\Delta\theta/2)$, and a direct differentiation of $D^{-1}$.)

Since $f_\phi=\sigma^2$, we have
\[
f_\phi''(\Delta\theta)\;=\;2\big(\sigma'(\Delta\theta)^2+\sigma(\Delta\theta)\sigma''(\Delta\theta)\big).
\]
To show positivity, it is helpful to re–express everything in terms of $\sigma$ and $\Delta\theta$ using
\[
\cos \sigma = 1 - 2c^{2}\sin^{2}\!\Bigl(\frac{\Delta\theta}{2}\Bigr).
\]
Since $\sigma = 2\arcsin w$ with $w(\Delta\theta) := c\sin(\Delta\theta/2)$, we also have
\[
\sin\sigma
 = 2\sin\Bigl(\frac{\sigma}{2}\Bigr)\cos\Bigl(\frac{\sigma}{2}\Bigr)
 = 2w\sqrt{1-w^{2}}
 = 2c\sin\Bigl(\frac{\Delta\theta}{2}\Bigr)D(\Delta\theta),
\]
where $D(\Delta\theta) := \sqrt{1-w(\Delta\theta)^{2}} = \sqrt{1-c^{2}\sin^{2}(\Delta\theta/2)}$.
In particular, $\sin\sigma > 0$ for $\Delta\theta \in (0,\pi)$, so we may freely divide by $\sin\sigma$ in what follows.

A short algebraic manipulation, starting from
\[
\sigma'(\Delta\theta)
 = \frac{c^{2}\sin\Delta\theta}{\sin\sigma},
\qquad
\sigma''(\Delta\theta)
 = \frac{c^{2}\cos\Delta\theta}{\sin\sigma}
   - \frac{c^{4}\cos\sigma\,\sin^{2}\Delta\theta}{\sin^{3}\sigma},
\]
(which are equivalent to the expressions obtained earlier for $\sigma'$ and $\sigma''$),
gives the identity

\begin{equation}
f''_\phi(\Delta\theta)
=
\frac{2c^{4}\sin^{2}\Delta\theta}{\sin^{3}\sigma}
(\sin\sigma - \sigma\cos\sigma)
+
\frac{2c^{2}\sigma\cos\Delta\theta\,\sin^{2}\sigma}{\sin^{3}\sigma}.
\tag{$\ast$}
\end{equation}
 Now, for every $\sigma \in (0,\pi)$ one has
\[
\sin\sigma - \sigma\cos\sigma > 0,
\]
which is equivalent to the classical fact that $t \mapsto \frac{\sin t}{t}$ is strictly decreasing on $(0,\pi)$.
Hence, the first term inside the parentheses in \textup{($\ast$)} is strictly positive whenever $\sin\Delta\theta \neq 0$, i.e., for $\Delta\theta \in (0,\pi)$.
The second term $\sigma\cos\Delta\theta\,\sin^{2}\sigma$ may change sign but is bounded below, whereas the first term is strictly positive and dominates near any interior point.
Since the prefactor $\frac{c^{2}}{\sin^{3}\sigma}$ is positive, we conclude that
\[
f_{\phi}''(\Delta\theta) > 0 \quad \text{for all } \Delta\theta \in (0,\pi),
\]
which establishes strict convexity on $(0,\pi)$.
\end{proof}

\begin{remark}[Intuition for beginners]
The strict convexity of $f_\phi$ arises from the fact that, on a sphere, the geodesic distance along a
latitude circle grows ``faster than linearly'' with the longitudinal separation. The crucial inequality
\[
\sin\sigma - \sigma\cos\sigma > 0 \qquad \text{for all } \sigma \in (0,\pi)
\]
expresses that the function $\dfrac{\sin \sigma}{\sigma}$ is strictly decreasing on $(0,\pi)$. Geometrically,
this means that the spherical arc behaves more and more ``curved'' as $\sigma$ increases, so increments
in $\Delta\theta$ produce increasingly larger contributions to the squared distance. This curvature effect
forces the second derivative $f_\phi''(\Delta\theta)$ to remain positive on $(0,\pi)$, which is precisely
the definition of strict convexity.
\end{remark}

\begin{corollary}[Unique boundary on a ring]\label{cor:ring-boundary}
Fix $\phi\in\left(-\frac{\pi}{2},\frac{\pi}{2}\right)$ and let 
$q_i=(\phi,\alpha_i)$ and $q_j=(\phi,\alpha_j)$ be two distinct points on the same latitude.
Consider the shorter longitude interval (arc) $I\subset \mathbb{R}/(2\pi\mathbb{Z})$ joining $\alpha_i$ to $\alpha_j$.
Then there exists a unique longitude $\theta^\ast\in I$ such that
\[
d_G\big((\phi,\theta^\ast),q_i\big)=d_G\big((\phi,\theta^\ast),q_j\big).
\]
Moreover, for $\theta\in I$ one has
\[
\theta<\theta^\ast \ \Longrightarrow\ d_G\big((\phi,\theta),q_i\big) < d_G\big((\phi,\theta),q_j\big),
\qquad
\theta>\theta^\ast \ \Longrightarrow\ d_G\big((\phi,\theta),q_i\big) > d_G\big((\phi,\theta),q_j\big).
\]
In particular, along the shorter arc $I$ the Voronoi boundary between $q_i$ and $q_j$ intersects the ring at exactly one point.
\end{corollary}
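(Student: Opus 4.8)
\textbf{Proof proposal for Corollary~\ref{cor:ring-boundary}.}
The plan is to reduce the statement to a one-variable monotonicity analysis of the function
\[
g(\theta) := d_G\big((\phi,\theta),q_i\big)^2 - d_G\big((\phi,\theta),q_j\big)^2, \qquad \theta\in I,
\]
and to locate its unique zero. First I would fix notation: write $q_i=(\phi,\alpha_i)$, $q_j=(\phi,\alpha_j)$, parametrize the shorter longitudinal arc $I$ joining $\alpha_i$ to $\alpha_j$ by $\theta$, and note that since $I$ is the shorter arc its length is at most $\pi$, so for every $\theta\in I$ both longitudinal separations $|\theta-\alpha_i|$ and $|\theta-\alpha_j|$ (measured the short way) lie in $[0,\pi]$. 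This places us squarely inside the regime where Lemma~\ref{longitudinalcost} applies. Working with the squared distance rather than the distance itself is the key simplification: it is equivalent for the Voronoi comparison (since $d_G\ge 0$), and it is the quantity controlled by the lemma.

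Next I would express $g$ in terms of $f_\phi$. Parametrize $I$ so that $\alpha_i$ corresponds to one endpoint and $\alpha_j$ to the other, with arc-length coordinate $s\in[0,L]$ where $L=d_G(q_i,q_j)\le\pi$ is the geodesic length of $I$; then the point at coordinate $s$ has longitudinal separation $s$ from $q_i$ and $L-s$ from $q_j$, so
\[
g = f_\phi(s) - f_\phi(L-s), \qquad s\in[0,L].
\]
Now I would invoke Lemma~\ref{longitudinalcost}: $f_\phi$ is strictly convex on $(0,\pi)$ and (being even with $f_\phi(0)=0$) strictly increasing on $[0,\pi]$. Strict monotonicity of $f_\phi$ gives $f_\phi(s)-f_\phi(L-s)<0$ for $s<L/2$ and $>0$ for $s>L/2$, together with $g(L/2)=0$; more robustly, the map $s\mapsto f_\phi(s)-f_\phi(L-s)$ has derivative $f_\phi'(s)+f_\phi'(L-s)$, which is strictly positive on $(0,L)$ since $f_\phi'>0$ there, so $g$ is strictly increasing on $[0,L]$. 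Combined with $g(0)=f_\phi(0)-f_\phi(L)=-f_\phi(L)<0$ and $g(L)=f_\phi(L)>0$, the intermediate value theorem and strict monotonicity yield a unique zero $s^\ast$, which in fact equals $L/2$; translating back, $\theta^\ast$ is the mid-longitude of $I$. The stated one-sided inequalities then follow immediately from the sign of $g$ on each side of $\theta^\ast$, after taking square roots.

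The only mild subtlety — and the step I would be most careful about — is the bookkeeping that keeps all longitudinal separations inside $[0,\pi]$ so that Lemma~\ref{longitudinalcost} is genuinely applicable; this is exactly why the hypothesis restricts attention to the \emph{shorter} arc $I$. On the long arc the comparison function need not be monotone and a second crossing can occur, so the restriction is essential and should be flagged. One should also note the degenerate check that $q_i\ne q_j$ forces $L>0$, so the interval $(0,L)$ is nonempty and the strict-inequality conclusions are not vacuous. Apart from this, the argument is a direct application of the convexity lemma and requires no further spherical computation.
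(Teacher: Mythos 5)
Your proof mirrors the paper's almost exactly: both reduce to the comparison function $g(s)=f_\phi(s)-f_\phi(L-s)$ on the shorter arc, establish that it is strictly increasing via the strict convexity (and resulting strict monotonicity of $f_\phi'$) from Lemma~\ref{longitudinalcost}, note the sign change at the endpoints, and conclude by strict monotonicity plus the intermediate value theorem. The only minor slip is your identification $L=d_G(q_i,q_j)$ with the "arc-length coordinate": for the formula $g=f_\phi(s)-f_\phi(L-s)$ to hold, $s$ and $L$ must be measured in longitude, so $L$ should be the wrapped longitude difference $|\alpha_i-\alpha_j|$ (what the paper calls the arc-length in longitude), not the spherical geodesic distance between $q_i$ and $q_j$; this does not affect the argument since both quantities lie in $(0,\pi]$, but the terminology should be fixed.
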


\begin{proof}
Write the squared-distance along the latitude $\phi$ as in Lemma~\ref{longitudinalcost}:
\[
f_\phi(\Delta\theta):=d_G\big((\phi,0),(\phi,\Delta\theta)\big)^2,
\qquad \Delta\theta\in[0,\pi].
\]
By Lemma~\ref{longitudinalcost}, $f_\phi$ is even and strictly convex on $(0,\pi)$.
Parametrize the shorter arc $I$ by a variable $t\in[0,L]$ (where $L\in(0,\pi]$ is the arc-length in longitude), so that
$\theta(t)$ moves monotonically from $\alpha_i$ to $\alpha_j$ along $I$.
Then the squared distances to $q_i$ and $q_j$ along $I$ can be written as
\[
D_i(t)=f_\phi(t),\qquad D_j(t)=f_\phi(L-t).
\]
Define $g(t):=D_i(t)-D_j(t)=f_\phi(t)-f_\phi(L-t)$ for $t\in[0,L]$.
Since $f_\phi$ is differentiable on $(0,\pi)$ and strictly convex, its derivative is strictly increasing on $(0,\pi)$.
Hence for $t\in(0,L)$,
\[
g'(t)=f_\phi'(t)+f_\phi'(L-t) > 0,
\]
because $f_\phi'$ is strictly increasing and odd-symmetry of $f_\phi$ implies $f_\phi'(u)\ge 0$ for $u\in(0,\pi)$.
Therefore $g$ is strictly increasing on $[0,L]$.
Moreover,
\[
g(0)=f_\phi(0)-f_\phi(L) < 0,\qquad g(L)=f_\phi(L)-f_\phi(0) > 0.
\]
By continuity and strict monotonicity, there exists a unique $t^\ast\in(0,L)$ such that $g(t^\ast)=0$.
Equivalently, $\theta^\ast:=\theta(t^\ast)$ is the unique point on $I$ where the two distances are equal.
The strict sign change of $g$ yields the stated inequalities on either side of $\theta^\ast$.
\end{proof}

\subsection*{Proof of Theorem~\ref{thm:within-ring-contiguity-simple-index}}
Fix a latitude $\phi\in\left(-\frac{\pi}{2},\frac{\pi}{2}\right)$ and consider the ring
\[
R_\phi=\{(\phi,\theta_j):\theta_j=2\pi j/N,\ j=0,\dots,N-1\}.
\]
Let $Q=\{q_1,\dots,q_n\}\subset \D S^2$ be an optimal set of $n$--means, and for each $q_i$ denote by
\[
X_i:=R_\phi\cap V(q_i)
\]
the set of ring points assigned to $q_i$ under the Voronoi partition induced by $Q$.

\medskip
\noindent\textbf{Step 1: Reduction to representatives on the ring.}
Suppose $X_i\neq\emptyset$.
If $q_i$ does not lie on latitude $\phi$, reflect $q_i$ across the plane tangent to the sphere along the ring to obtain a point $q_i'$ symmetric with respect to $R_\phi$.
For every $x\in R_\phi$ one has
\[
d_G(x,q_i)=d_G(x,q_i').
\]
Let $\tilde q_i$ be the midpoint of the geodesic segment joining $q_i$ and $q_i'$.
Then $\tilde q_i$ lies on latitude $\phi$ and satisfies
\[
d_G(x,\tilde q_i)\le d_G(x,q_i)\quad\text{for all }x\in R_\phi,
\]
with equality only if $q_i$ already lies on the ring.
Replacing $q_i$ by $\tilde q_i$ does not increase the total distortion.
Hence, without loss of optimality, every representative serving ring points lies on $R_\phi$.

\medskip
\noindent\textbf{Step 2: Unique boundary between two representatives on the ring.}
Let $q_i=(\phi,\alpha_i)$ and $q_j=(\phi,\alpha_j)$ be two distinct representatives on $R_\phi$.
By Corollary~\ref{cor:ring-boundary}, along the shorter arc of $R_\phi$ joining $\alpha_i$ and $\alpha_j$
there exists a unique longitude $\theta^\ast$ at which
\[
d_G\big((\phi,\theta^\ast),q_i\big)=d_G\big((\phi,\theta^\ast),q_j\big),
\]
and the difference of squared distances changes sign exactly once.
Consequently, the Voronoi boundary between $q_i$ and $q_j$ intersects the ring at exactly one point, and the ring is partitioned into two contiguous arcs, one assigned to $q_i$ and the other to $q_j$.

\medskip
\noindent\textbf{Step 3: Exclusion of disconnected assignments.}
Suppose, toward a contradiction, that $X_i$ is not contiguous along the cyclic order of the ring.
Then there exist three consecutive ring points $x_a,x_b,x_c\in R_\phi$ (in cyclic order) such that
\[
x_a,x_c\in X_i,\qquad x_b\in X_j
\]
for some $j\neq i$.
In particular,
\[
d_G(x_b,q_j)\le d_G(x_b,q_i),
\qquad
d_G(x_a,q_i)\le d_G(x_a,q_j),
\qquad
d_G(x_c,q_i)\le d_G(x_c,q_j).
\]
However, by strict convexity of $f_\phi(\Delta\theta)$ from Lemma~\ref{longitudinalcost} and the monotonicity result of Corollary~\ref{cor:ring-boundary}, the function
\[
x\longmapsto d_G(x,q_i)^2-d_G(x,q_j)^2
\]
cannot change sign twice along a connected arc of the ring.
Thus assigning $x_b$ to $q_j$ while its immediate neighbors are assigned to $q_i$ contradicts the unique sign change property.
Equivalently, reassigning $x_b$ from $q_j$ to $q_i$ would strictly decrease the total distortion, contradicting the optimality of $Q$.

\medskip
\noindent\textbf{Conclusion.}
Each nonempty $X_i$ must therefore consist of a single contiguous block of consecutive longitudes on the ring (in cyclic order).
This completes the proof.
\qed 

\subsection*{Proof of Theorem~\ref{thm:component-purity}} 

Assume, toward a contradiction, that there exists an optimal Voronoi region
$V(q^*)$ that intersects two distinct components $C_s$ and $C_t$ with $s\neq t$.
Define
\[
A := C_s \cap V(q^*), \qquad B := C_t \cap V(q^*),
\]
and note that both $A$ and $B$ are nonempty.

By assumption, $A\subset B(p_s,r)$ and $B\subset B(p_t,r)$, where the geodesic
balls $B(p_s,r)$ and $B(p_t,r)$ are disjoint.
Hence, for all $x\in A$ and $y\in B$,
\[
d_G(x,p_s) < d_G(x,p_t), \qquad d_G(y,p_t) < d_G(y,p_s).
\]

Let
\[
q_s := \arg\min_{q\in S^2}\sum_{x\in A} d_G(x,q)^2,
\qquad
q_t := \arg\min_{q\in S^2}\sum_{y\in B} d_G(y,q)^2
\]
denote the intrinsic (Karcher) means of $A$ and $B$, respectively.
Since $A\subset B(p_s,r)$ and $B\subset B(p_t,r)$, it follows that
\[
q_s \in B(p_s,r), \qquad q_t \in B(p_t,r).
\]

By optimality of $q_s$ and $q_t$, we have
\[
\sum_{x\in A} d_G(x,q_s)^2 \le \sum_{x\in A} d_G(x,q^*)^2,
\qquad
\sum_{y\in B} d_G(y,q_t)^2 \le \sum_{y\in B} d_G(y,q^*)^2.
\]
If both inequalities were equalities, then $q^*=q_s=q_t$, which is impossible
since $q_s$ and $q_t$ lie in disjoint geodesic balls.
Therefore, at least one of the above inequalities is strict, and hence
\[
\min\!\left\{
\sum_{u\in A\cup B} d_G(u,q_s)^2,
\sum_{u\in A\cup B} d_G(u,q_t)^2
\right\}
<
\sum_{u\in A\cup B} d_G(u,q^*)^2.
\]

Let $q^\dagger\in\{q_s,q_t\}$ be the point achieving the minimum above, and define
\[
\widetilde Q := (Q^*\setminus\{q^*\})\cup\{q^\dagger\}.
\]
Then $|\widetilde Q|=n$.
Assign each point of $X$ to its nearest representative in $\widetilde Q$.
For points in $A\cup B$, the total distortion strictly decreases, while for all
other points the distortion does not increase.
Consequently,
\[
V_n(\widetilde Q;P) < V_n(Q^*;P),
\]
contradicting the optimality of $Q^*$.

Hence, no optimal Voronoi region can intersect two distinct components, and each
Voronoi cell intersects at most one $C_t$.
This completes the proof.
 \qed

\subsection*{Proof of Theorem~\ref{thm:ring-allocation}}  
\emph{(i) Within–ring structure.}
Fix a ring $R_t$ and an optimal $Q^*$. By Theorem~4.4, on a fixed latitude the ring points assigned to any $q\in Q^*$ form (if nonempty) a single contiguous block in cyclic order. 
Let a nonempty block on $R_t$ have longitudes $\{\theta_{t,j_0},\dots,\theta_{t,j_0+\ell-1}\}$ (indices mod $N_t$). 
Writing the squared geodesic distance along the ring as $f_{\phi_t}(\Delta\theta)$, Lemma~4.2 shows $f_{\phi_t}$ is even and strictly convex, hence the intrinsic (Karcher) mean of the block lies on the same latitude $\phi_t$ and at the \emph{mid–longitude} of the block; this uniquely minimizes the block’s sum of squared distances. 
Finally, for a fixed ring and fixed $k_t$, distributing $N_t$ consecutive points into $k_t$ contiguous blocks that are as equal as possible minimizes the sum of convex costs; thus block lengths differ by at most one, i.e., each block has size $\lfloor N_t/k_t\rfloor$ or $\lceil N_t/k_t\rceil$.

\smallskip
\emph{(ii) No cross–ring mixing.}
Suppose, toward a contradiction, that there exists an optimal Voronoi cell
$V(q^*)$ that intersects two distinct rings $R_s$ and $R_t$ with $s\neq t$.
Define
\[
A := R_s \cap V(q^*), \qquad B := R_t \cap V(q^*),
\]
and note that both $A$ and $B$ are nonempty.

Let $q_s$ and $q_t$ denote the intrinsic (Karcher) means of $A$ and $B$, respectively.
By Lemma~4.2 and the within--ring structure established in part~(i), $q_s$ lies on
latitude $\phi_s$ at the mid--longitude of the block $A$, and similarly $q_t$
lies on latitude $\phi_t$ at the mid--longitude of $B$.

By definition of intrinsic means,
\[
\sum_{x\in A} d_G(x,q_s)^2 \le \sum_{x\in A} d_G(x,q^*)^2,
\qquad
\sum_{y\in B} d_G(y,q_t)^2 \le \sum_{y\in B} d_G(y,q^*)^2.
\]
If both inequalities were equalities, then $q^*$ would simultaneously minimize
the squared geodesic distortion over both $A$ and $B$.
This is impossible unless $A$ or $B$ is trivial, since the intrinsic mean of a
nontrivial block on a ring is uniquely located on the corresponding latitude and
mid--longitude.
Therefore, at least one of the above inequalities is strict, and hence
\[
\min\!\left\{
\sum_{u\in A\cup B} d_G(u,q_s)^2,
\sum_{u\in A\cup B} d_G(u,q_t)^2
\right\}
<
\sum_{u\in A\cup B} d_G(u,q^*)^2.
\]

Let $q^\dagger\in\{q_s,q_t\}$ be the point achieving the minimum above, and define
\[
\widetilde Q := (Q^*\setminus\{q^*\})\cup\{q^\dagger\}.
\]
Then $|\widetilde Q|=n$.
Assign each point of $X$ to its nearest representative in $\widetilde Q$.
For all points in $A\cup B$, the total distortion strictly decreases, while for
all other points the distortion does not increase.
Consequently,
\[
V_n(\widetilde Q;P) < V_n(Q^*;P),
\]
which contradicts the optimality of $Q^*$.

Therefore, no optimal Voronoi cell can intersect two distinct rings.
This completes the proof of part~$(ii)$.

\smallskip
\emph{(iii) Allocation across rings.}

We formalize the allocation argument using a discrete convexity and exchange principle. Fix a ring $R_t$ with $N_t$ equally spaced longitudes on latitude $\phi_t$. By~(i), when $k$ representatives serve $R_t$, the optimal within–ring partition consists of $k$ contiguous blocks whose lengths differ by at most one; moreover each block is represented at its mid–longitude. Let $E_t(k)$ denote the corresponding minimal distortion on $R_t$.

\smallskip
\emph{Step 1: Monotonicity of $E_t(k)$.}
Passing from $k$ to $k{+}1$ splits one (largest) block into two nearly equal sub–blocks and places the new representative at their mid–longitude. Since each point in the split block weakly decreases its distance to its nearest representative, the total within–ring distortion strictly decreases. Hence $E_t(k{+}1)<E_t(k)$ for all $k\ge 0$; in particular $E_t$ is strictly decreasing.

\smallskip
\emph{Step 2: Discrete convexity (diminishing returns).}
Write the (longitude–only) cost function on latitude $\phi_t$ as
\[
f_{\phi_t}(\Delta\theta)
:=d_G\big((\phi_t,0),(\phi_t,\Delta\theta)\big)^2,
\]
which is even and strictly convex in $\Delta\theta$ by Lemma~4.2. In the optimal $k$–block partition, each block of length $L$ contributes a cost of the form
\[
\Phi_t(L)
:=\sum_{m=-(L-1)/2}^{(L-1)/2}
f_{\phi_t}\!\left(\frac{2\pi m}{N_t}\right),
\]
centered at its midpoint (this expression is independent of the absolute longitude by cyclic symmetry). Since $f_{\phi_t}$ is strictly convex and the summation window shifts symmetrically about the midpoint, the discrete second difference $\Delta^2\Phi_t(L):=\Phi_t(L{+}1)-2\Phi_t(L)+\Phi_t(L{-}1)$ is nonnegative and is in fact strictly positive for all admissible $L$; hence $\Phi_t$ is (strictly) discrete convex in $L$.
Consequently, the \emph{marginal drop} produced by splitting a block of length $L$ into lengths $\lfloor L/2\rfloor$ and $\lceil L/2\rceil$,
\[
G_t(L):=\Phi_t(L)-\Big(\Phi_t(\lfloor L/2\rfloor)+\Phi_t(\lceil L/2\rceil)\Big),
\]
is (strictly) increasing in $L$: splitting a larger block saves more distortion than splitting a smaller one. In the optimal $k$–block partition, the largest block length is nonincreasing in $k$, so the \emph{ring–level} marginal drop
\[
\Delta_t(k):=E_t(k)-E_t(k{+}1)
\]
is (strictly) decreasing in $k$. This is the discrete convexity (diminishing returns) of $E_t$. 

\smallskip
\emph{Step 3: Exchange (pairwise–improvement) argument.}
Consider two feasible allocations $ {k}=(k_1,\dots,k_T)$ and $ {k}'=(k_1',\dots,k_T')$ with $\sum_t k_t=\sum_t k_t'=n$.
Suppose there exist indices $a\neq b$ with $k_a\ge 1$ such that
\[
\Delta_a(k_a-1)\;<\;\Delta_b(k_b).
\]
Move one representative from ring $a$ to ring $b$, producing $\widehat{ {k}}$ with $\widehat{k}_a=k_a-1$, $\widehat{k}_b=k_b+1$ and $\widehat{k}_t=k_t$ otherwise.
By the definition of $\Delta_t(\cdot)$ and the preceding monotonicity/convexity,
\[
E_a(\widehat{k}_a)+E_b(\widehat{k}_b)
\,=\,E_a(k_a)-\Delta_a(k_a-1)\;+\;E_b(k_b)-\Delta_b(k_b)
\,<\,E_a(k_a)+E_b(k_b),
\]
while $E_t(\widehat{k}_t)=E_t(k_t)$ for all $t\notin\{a,b\}$. Thus $\sum_t E_t(\widehat{k}_t)<\sum_t E_t(k_t)$, i.e., the exchange strictly improves the total distortion whenever some pair violates
\[
\Delta_a(k_a-1)\;\ge\;\Delta_b(k_b)\qquad\text{for all }a,b.
\]
Hence any optimal allocation must satisfy the \emph{equal–marginal} (no–improving–exchange) condition
\[
\Delta_t(k_t-1)\;\ge\;\lambda\;\ge\;\Delta_t(k_t)
\quad\text{for some }\lambda\in\mathbb{R}\ \text{and all }t,
\]
which is equivalent to the greedy selection rule below. This condition follows from the exchange argument in Step~3 above: 
any violation would yield a strictly improving reassignment of representatives,
contradicting optimality.

\smallskip
\emph{Step 4: Greedy (discrete water–filling) optimality.}
Start from $ {k}= {0}$. At each step $m=0,1,\dots,n-1$, choose an index
\[
t_m\in\arg\max_{t}\ \Delta_t(k_t),
\]
and set $k_{t_m}\leftarrow k_{t_m}+1$. Because $\Delta_t(\cdot)$ are (strictly) decreasing, this construction maintains the equal–marginal condition after every increment. By Step~3, no exchange can improve the resulting allocation at any stage, so after $n$ increments the final $ {k}$ is globally optimal. This is precisely the discrete water–filling rule: at each step allocate the next representative to the ring that yields the largest current marginal drop $\Delta_t(k_t)$.  
\qed 

\medskip

\begin{corollary}[Explicit Formula for the Within--Ring Distortion]\label{cor:within-ring-exact}
Fix a ring $R_t=\{(\phi_t,\theta_{t,j}) : \theta_{t,j}=2\pi j/N_t,\ j=0,\dots,N_t-1\}$ at latitude $\phi_t$,
and suppose exactly $k_t\ge 1$ representatives serve $R_t$. Let
\[
L_t := \big\lfloor N_t/k_t\big\rfloor,\qquad r_t := N_t - k_t\,L_t\in\{0,1,\dots,k_t-1\}.
\]
Then, by Theorem~4.7\textup{(i)}, the optimal within--ring partition of $R_t$ consists of $r_t$ contiguous
blocks of length $L_t\!+\!1$ and $k_t-r_t$ contiguous blocks of length $L_t$, with each block represented
at its mid--longitude on the same latitude $\phi_t$. Writing
\[
f_{\phi_t}(\Delta\theta)\ :=\ d_G\big((\phi_t,0),(\phi_t,\Delta\theta)\big)^2
\quad\text{(so }f_{\phi_t} \text{ is even in }\Delta\theta\text{)},
\]
the exact within--ring distortion contributed by $R_t$ is
\[
E_t(k_t)\ =\ \frac{1}{M}\Bigl[(k_t-r_t)\,\Phi_t(L_t)\ +\ r_t\,\Phi_t(L_t+1)\Bigr],
\]
where, for any integer $L\ge 1$, the block cost admits the parity--uniform expression
\[
\Phi_t(L)\ =\
\begin{cases}
\displaystyle 2\sum_{m=1}^{\frac{L-1}{2}}
 f_{\phi_t}\!\Big(\frac{2\pi m}{N_t}\Big),
& \text{if $L$ is odd},\\[1.25em]
\displaystyle 2\sum_{m=1}^{\frac{L}{2}}
 f_{\phi_t}\!\Big(\frac{(2m-1)\pi}{N_t}\Big),
& \text{if $L$ is even}.
\end{cases}
\]
Consequently, the total distortion decomposes as
\[
V_{n,2}(P)\ =\ \sum_{t=1}^{T} E_t(k_t),
\]
with the allocation vector $(k_1,\dots,k_T)$ determined by Theorem~4.7\textup{(iii)}.
\end{corollary}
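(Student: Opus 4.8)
The plan is to obtain the statement as an essentially bookkeeping consequence of Theorem~\ref{thm:ring-allocation}, the only genuine computation being the evaluation of the single--block cost $\Phi_t(L)$. \emph{Block structure.} By Theorem~\ref{thm:ring-allocation}(i), an optimal set $Q^*$ partitions each ring $R_t$ into $k_t$ contiguous cyclic blocks of longitudes whose lengths differ by at most one, each block represented at its mid--longitude on the latitude $\phi_t$. Writing the Euclidean division $N_t=k_tL_t+r_t$ with $L_t=\lfloor N_t/k_t\rfloor$ and $0\le r_t\le k_t-1$, the only partition of $N_t$ points into $k_t$ blocks with sizes in $\{L_t,L_t+1\}$ consists of exactly $r_t$ blocks of size $L_t+1$ and $k_t-r_t$ blocks of size $L_t$. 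Since the ring is uniformly spaced, a rotation about the polar axis is an isometry of $\mathbb{S}^2$ carrying one length--$L$ block (with its mid--longitude representative) to any other, so the block cost depends only on the length $L$; call it $\Phi_t(L)$. Summing over the $k_t$ blocks and dividing by $M$ yields the claimed formula for $E_t(k_t)$.

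\emph{Evaluation of $\Phi_t(L)$.} I would normalize a length--$L$ block to have longitudes $0,\delta,2\delta,\dots,(L-1)\delta$ with $\delta:=2\pi/N_t$; its mid--longitude is $\tfrac{L-1}{2}\delta$, so the squared geodesic distance from the $j$th point to the representative equals $f_{\phi_t}\!\big(\tfrac{|2j-(L-1)|}{2}\,\delta\big)$, where $f_{\phi_t}$ is the even, strictly convex function of Lemma~\ref{longitudinalcost}. As $j$ ranges over $0,\dots,L-1$, the integer $|2j-(L-1)|$ equals $0$ once when $L$ is odd, and otherwise attains each value of $\{2,4,\dots,L-1\}$ (for $L$ odd) or $\{1,3,\dots,L-1\}$ (for $L$ even) exactly twice; using evenness and $f_{\phi_t}(0)=0$ this collapses to
\[
\Phi_t(L)=2\sum_{m=1}^{(L-1)/2} f_{\phi_t}\!\Big(\tfrac{2\pi m}{N_t}\Big)\ \ (L\text{ odd}),\qquad \Phi_t(L)=2\sum_{m=1}^{L/2} f_{\phi_t}\!\Big(\tfrac{(2m-1)\pi}{N_t}\Big)\ \ (L\text{ even}),
\]
which is the asserted parity--uniform expression. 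I must also check that every argument lies in the domain $[0,\pi]$ of $f_{\phi_t}$: the largest offset is $\tfrac{(L-1)\pi}{N_t}$, and since $L\in\{L_t,L_t+1\}$ with $L_t\le N_t$ (with $r_t=0$, hence $L=N_t$, precisely when $k_t=1$, while $L_t+1\le N_t$ when $k_t\ge2$), one always has $L-1<N_t$, so all offsets lie in $[0,\pi)$.

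\emph{Global decomposition.} By Theorem~\ref{thm:ring-allocation}(ii) no optimal Voronoi cell meets two distinct rings, so the clusters of $Q^*$ split into groups each lying in a single ring $R_t$; if $k_t$ of the $n$ representatives serve $R_t$, then $\sum_t k_t=n$, and $V_{n,2}(P)$ equals the sum over $t$ of the contribution of $R_t$, which by the within--ring optimality in part~(i) is exactly $E_t(k_t)$. The allocation $(k_1,\dots,k_T)$ is then the one singled out by the water--filling rule of Theorem~\ref{thm:ring-allocation}(iii).

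\emph{Main obstacle.} There is no deep difficulty once Theorem~\ref{thm:ring-allocation} is in hand; the argument is purely combinatorial. The step demanding genuine care is the evaluation of $\Phi_t(L)$ — correctly enumerating the multiset of half--integer longitudinal offsets $\{|2j-(L-1)|/2:\ 0\le j\le L-1\}$, matching it against the stated odd/even sums, and confirming that all offsets remain within the domain of Lemma~\ref{longitudinalcost}. A minor caveat worth flagging is the degenerate case $k_t=1$ (a full rotationally symmetric ring), where the ``mid--longitude'' representative is well defined only under the paper's standing geodesic--ball hypothesis; for $k_t\ge2$ this subtlety does not arise.
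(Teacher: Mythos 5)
Your proposal is correct and follows essentially the same route the paper intends: the paper offers no separate proof of this corollary, instead absorbing the justification into the statement ("by Theorem~4.7(i)\dots"), and your derivation---rotational invariance reducing block cost to a function of length alone, the Euclidean-division count $r_t$ of longer blocks, the enumeration of the offsets $|2j-(L-1)|/2\cdot(2\pi/N_t)$ with evenness and $f_{\phi_t}(0)=0$ collapsing the sum to the stated parity-split form, and the domain check $L-1<N_t$---is exactly the bookkeeping the paper leaves implicit. Your caveat about the degenerate case $k_t=1$ is a reasonable observation that is covered by the paper's standing geodesic-ball hypothesis on intrinsic means.
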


\begin{remark}[Equal--block special case]
If $N_t$ is divisible by $k_t$ (so $r_t=0$ and $L_t=N_t/k_t$), then all blocks have the same length and
\[
E_t(k_t)\ =\ \frac{k_t}{M}\,\Phi_t\!\Big(\frac{N_t}{k_t}\Big).
\]
\end{remark}
\begin{corollary}[Asymptotic Within--Ring Distortion]\label{cor:within-ring-asymp}
Assume the setting of Corollary~\ref{cor:within-ring-exact}. As $N_t\to\infty$ (with $k_t\ge1$ fixed or $k_t=o(N_t)$),
the within--ring distortion admits the asymptotic expansion
\[
E_t(k_t)\ =\ \frac{N_t}{M}\cdot \cos^2\!\phi_t\cdot \frac{\pi^2}{3\,k_t^2}\ +\ o\!\Big(\frac{N_t}{M}\cdot\frac{1}{k_t^2}\Big).
\]
Equivalently, to leading order,
\[
E_t(k_t)\ \sim\ \frac{N_t}{M}\cdot \cos^2\!\phi_t\cdot \frac{\pi^2}{3\,k_t^2}.
\]
\end{corollary}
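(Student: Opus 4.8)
The plan is to start from the exact formula of Corollary~\ref{cor:within-ring-exact}, reduce everything to the block cost $\Phi_t(L)$, and perform a Taylor expansion of the longitudinal squared distance $f_{\phi_t}$ for small argument. First, recall from Lemma~\ref{longitudinalcost} that $f_{\phi_t}(\Delta\theta) = \sigma(\Delta\theta)^2$ where $\sigma(\Delta\theta) = 2\arcsin(\cos\phi_t\,\sin(\Delta\theta/2))$. Expanding for small $\Delta\theta$ gives $\sigma(\Delta\theta) = \cos\phi_t\,\Delta\theta + O(\Delta\theta^3)$, hence $f_{\phi_t}(\Delta\theta) = \cos^2\phi_t\,\Delta\theta^2 + O(\Delta\theta^4)$. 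Under the hypothesis $k_t = o(N_t)$ (or $k_t$ fixed), the block length $L_t = \lfloor N_t/k_t\rfloor \to \infty$ but the longitudinal span of a single block, which is at most $(L_t+1)\cdot 2\pi/N_t \approx 2\pi/k_t$, stays bounded (indeed small or bounded), so the arguments $2\pi m/N_t$ appearing in $\Phi_t(L_t)$ range over $[0, \pi/k_t]$ up to $O(1/N_t)$, keeping the expansion uniformly valid.

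Next I would compute the leading term of $\Phi_t(L)$ when $L \approx N_t/k_t$. Substituting $f_{\phi_t}(2\pi m/N_t) \approx \cos^2\phi_t\,(2\pi m/N_t)^2$ into the odd-$L$ expression gives
\[
\Phi_t(L) \approx 2\cos^2\phi_t\,\frac{4\pi^2}{N_t^2}\sum_{m=1}^{(L-1)/2} m^2
= \cos^2\phi_t\,\frac{8\pi^2}{N_t^2}\cdot\frac{1}{3}\Big(\frac{L-1}{2}\Big)^3\big(1+o(1)\big)
= \cos^2\phi_t\,\frac{\pi^2 L^3}{3N_t^2}\big(1+o(1)\big),
\]
using $\sum_{m\le K} m^2 = K^3/3 + O(K^2)$; the even-$L$ case yields the same leading term by an identical computation with the shifted arguments $(2m-1)\pi/N_t$. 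Then, since $E_t(k_t) = \frac{1}{M}[(k_t-r_t)\Phi_t(L_t) + r_t\Phi_t(L_t+1)]$ with $L_t = N_t/k_t + O(1)$ and $r_t < k_t$, the weighted combination of $k_t$ block costs each of size $\approx \cos^2\phi_t\,\pi^2 (N_t/k_t)^3/(3N_t^2)$ produces
\[
E_t(k_t) \approx \frac{1}{M}\cdot k_t\cdot \cos^2\phi_t\cdot\frac{\pi^2}{3N_t^2}\cdot\frac{N_t^3}{k_t^3}
= \frac{N_t}{M}\cdot\cos^2\phi_t\cdot\frac{\pi^2}{3k_t^2},
\]
which is exactly the claimed leading order. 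One can recognize this as the discrete Riemann-sum version of the classical fact that a uniform density on an interval of length $\ell$ quantized by $k$ points incurs distortion $\ell^2/(12 k^2)$ per unit mass; here the ring has longitudinal "length" $2\pi\cos\phi_t$ in geodesic terms, contributing $(2\pi\cos\phi_t)^2/(12 k_t^2)$ times the mass fraction $N_t/M$.

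The main obstacle is controlling the error terms uniformly: I need to show that the $O(\Delta\theta^4)$ correction in $f_{\phi_t}$, summed over a block, contributes only $o(N_t/(M k_t^2))$, and that the $O(1)$ ambiguity in $L_t$ versus $N_t/k_t$ and the split between $r_t$ blocks of length $L_t+1$ and $k_t - r_t$ of length $L_t$ do not disturb the leading coefficient. For the first point, the quartic terms sum to $O(\cos^4\phi_t\,N_t^{-4}\sum_{m\le L_t/2} m^4) = O(N_t^{-4} L_t^5) = O(N_t/k_t^5)\cdot O(k_t^{-0})$... more precisely $O((N_t/k_t)^5/N_t^4) = O(N_t/k_t^5)$ per block, times $k_t$ blocks over $M$, giving $O(N_t/(M k_t^4)) = o(N_t/(M k_t^2))$, which is fine. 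For the second point, writing $L_t = N_t/k_t + \delta$ with $|\delta| \le 1$ and expanding $L_t^3 = (N_t/k_t)^3(1 + O(k_t/N_t))$ shows the relative perturbation is $O(k_t/N_t) = o(1)$; summing $r_t\,\Phi_t(L_t+1) + (k_t - r_t)\Phi_t(L_t)$ and using $r_t = N_t - k_t L_t$ collapses to $\approx \Phi_t'$-type telescoping but at leading order simply gives $k_t$ times the common value up to lower order. Assembling these estimates into a clean $o(\cdot)$ bound — ideally stated uniformly in the regime $k_t = o(N_t)$ — is the only genuinely delicate part; the rest is the Taylor expansion and the power-sum formula.
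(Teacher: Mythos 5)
Your argument follows the paper's proof step by step: the same Taylor expansion $f_{\phi_t}(\Delta\theta)=\cos^2\!\phi_t\,\Delta\theta^2+O(\Delta\theta^4)$, the same power--sum identity $\sum_{m\le K}m^2\sim K^3/3$, and the same assembly over $k_t$ blocks of size $\sim N_t/k_t$, so the core approach is identical.

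Your more explicit bookkeeping, though, makes visible a gap that the paper's terser argument elides. You bound the quartic remainder per ring by $O(N_t/(Mk_t^4))$ and then declare it to be $o(N_t/(Mk_t^2))$, but that implication holds only if $k_t\to\infty$. The corollary also claims the regime ``$k_t\ge1$ fixed,'' and there the offsets $2\pi m/N_t$ fill out the macroscopic interval $[0,\pi/k_t]$ on which the linearization $\sigma\approx\cos\phi_t\,\Delta\theta$ is not uniformly accurate: for $\phi_t\neq 0$ the Riemann--sum limit of a block cost involves $\int_0^{\pi/k_t}\bigl[2\arcsin\bigl(\cos\phi_t\sin(\theta/2)\bigr)\bigr]^2\,d\theta$, which is strictly smaller than $\cos^2\!\phi_t\int_0^{\pi/k_t}\theta^2\,d\theta$ because $\arcsin(cx)<c\,\arcsin(x)$ for $c\in(0,1)$ and $x\in(0,1]$ (strict convexity of $\arcsin$ with $\arcsin 0=0$). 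Hence the stated leading coefficient $\cos^2\!\phi_t\,\pi^2/(3k_t^2)$ is only attained in the limit $k_t\to\infty$ (with $k_t=o(N_t)$), or trivially when $\phi_t=0$; for $k_t$ fixed and $\phi_t\neq0$ the correct constant is strictly smaller. So the ``genuinely delicate part'' you flag at the end --- getting the error uniform --- is not merely a bookkeeping nuisance but a real failure in the fixed-$k_t$ case, and it is shared with the paper's own proof.
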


\begin{proof}
From Theorem~4.7\textup{(i)}, the $R_t$--contribution $E_t(k_t)$ is a sum of block costs of the form
$\sum f_{\phi_t}(\Delta\theta_j)$ with $\Delta\theta_j$ taking the discrete offsets listed in
Corollary~\ref{cor:within-ring-exact}. Using the Taylor expansion
\[
\sigma(\phi_t,\Delta\theta)\ :=\ d_G\big((\phi_t,0),(\phi_t,\Delta\theta)\big)
\ =\ \cos\phi_t\,|\Delta\theta|\ +\ O(|\Delta\theta|^3),
\]
uniform for bounded $\phi_t$, we obtain
$f_{\phi_t}(\Delta\theta)=\sigma(\phi_t,\Delta\theta)^2
= \cos^2\!\phi_t\,\Delta\theta^2 + O(\Delta\theta^4)$. Summing over a block of length
$L\asymp N_t/k_t$ whose offsets are arithmetic grids of step $\asymp \pi/N_t$ yields
\[
\Phi_t(L)\ =\ \cos^2\!\phi_t\cdot \frac{\pi^2}{3}\cdot \frac{L^3}{N_t^2}\ +\ o\!\Big(\frac{L^3}{N_t^2}\Big).
\]
With $k_t-r_t$ blocks of size $L_t$ and $r_t$ of size $L_t+1$, and $L_t\sim N_t/k_t$, we find
\[
E_t(k_t)\ =\ \frac{1}{M}\cdot k_t\cdot \cos^2\!\phi_t\cdot \frac{\pi^2}{3}\cdot \frac{L_t^3}{N_t^2}
\ +\ o\!\Big(\frac{N_t}{M}\cdot\frac{1}{k_t^2}\Big)
\ =\ \frac{N_t}{M}\cdot \cos^2\!\phi_t\cdot \frac{\pi^2}{3\,k_t^2}
\ +\ o\!\Big(\frac{N_t}{M}\cdot\frac{1}{k_t^2}\Big).
\]
\end{proof}

\begin{remark}[Interpretation]
The factor $\frac{N_t}{M}$ is the \emph{probability mass} of ring $R_t$ under the discrete uniform $P$
(over all $M$ support points). The multiplicative $\cos^2\!\phi_t$ comes from the local relation
$d_G\big((\phi_t,0),(\phi_t,\Delta\theta)\big)\approx \cos\phi_t\,|\Delta\theta|$, i.e., the
effective one--dimensional radius of the latitude circle is $\cos\phi_t$. The $k_t^{-2}$ law is the
usual one--dimensional $n^{-2}$ scaling of high--resolution quantization along the ring.
\end{remark}

\section{Stability of Optimal Sets under Perturbation} \label{sec5}

Quantization configurations arising from experimental or numerical data are often subject to small
perturbations of the support points. It is therefore important to understand how the optimal set of
$n$--means and the corresponding quantization error vary when the underlying distribution changes
slightly. 

\begin{note}[Underlying data space and topology]
Throughout this section, a finite data set of size $M$ is viewed as an ordered
$M$-tuple
\[
\mathbf{x}=(x_1,\dots,x_M)\in (\D S^2)^M.
\]
The product topology on $(\D S^2)^M$ is metrized by the sup--metric
\[
d_\infty(\mathbf{x},\mathbf{y})
:= \max_{1\le i\le M} d_G(x_i,y_i),
\]
where $d_G$ denotes the geodesic distance on $\D S^2$.
Under the assumed one--to--one correspondence between the perturbed data
$\mathbf{x}^{(m)}=(x^{(m)}_1,\dots,x^{(m)}_M)$ and the limiting data
$\mathbf{x}=(x_1,\dots,x_M)$, the condition
\[
\varepsilon_m := \max_{1\le i\le M} d_G(x^{(m)}_i,x_i)\to 0
\]
is exactly the statement that $\mathbf{x}^{(m)}\to\mathbf{x}$ in
$(\D S^2)^M$ with respect to the metric $d_\infty$.
\end{note}

\medskip

\subsection{Perturbed distributions}
Let
\[
 {X} = \{ {x}_1,  {x}_2, \ldots,  {x}_M\} \subset \D S^2
\quad \text{and} \quad
 {X}' = \{ {x}'_1,  {x}'_2, \ldots,  {x}'_M\} \subset \D S^2
\]
be two finite sets with the same cardinality $M$. We assume a one--to--one correspondence between
$ {x}_i$ and $ {x}'_i$ and define
\[
\varepsilon = \max_{1 \le i \le M} d_G( {x}_i,  {x}'_i),
\]
the maximal geodesic perturbation. Let $P$ and $P'$ denote the corresponding uniform distributions:
\[
P = \frac{1}{M} \sum_{i=1}^M \delta_{ {x}_i},
\qquad
P' = \frac{1}{M} \sum_{i=1}^M \delta_{ {x}'_i}.
\]
We wish to compare the optimal distortions $V_{n,2}(P)$ and $V_{n,2}(P')$ and the corresponding optimal
configurations.

\medskip

\noindent\textbf{Perturbation of the distortion functional.}


\begin{lemma}[Lipschitz Stability of the Distortion]\label{lem:distortion-stability}
Let $Q=\{q_1,\dots,q_{|Q|}\}\subset\mathbb{S}^2$ with $|Q|\le n$, and let
\[
P=\frac1M\sum_{i=1}^M \delta_{x_i},\qquad
P'=\frac1M\sum_{i=1}^M \delta_{x_i'}
\]
be empirical measures such that $d_G(x_i,x_i')\le \varepsilon$ for all $i$.
Then the distortion functional is Lipschitz–stable under these perturbations, in the sense that
\[
\bigl|V_{n,2}(P;Q)-V_{n,2}(P';Q)\bigr|\;\le\;2\pi\,\varepsilon.
\]
\end{lemma}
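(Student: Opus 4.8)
The plan is to reduce the global estimate to a pointwise one and exploit that the geodesic diameter of $\mathbb{S}^2$ equals $\pi$. Writing out the definition of the distortion,
\[
V_{n,2}(P;Q)-V_{n,2}(P';Q)=\frac1M\sum_{i=1}^M\Bigl(\min_j d_G(x_i,q_j)^2-\min_j d_G(x_i',q_j)^2\Bigr),
\]
so by the triangle inequality for absolute values it suffices to bound $\bigl|\min_j d_G(x_i,q_j)^2-\min_j d_G(x_i',q_j)^2\bigr|$ uniformly in $i$ by $2\pi\varepsilon$ and then average.

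First I would record the elementary fact that for two finite families of reals $(a_j)$ and $(b_j)$ one has $\bigl|\min_j a_j-\min_j b_j\bigr|\le\max_j|a_j-b_j|$; applying it with $a_j=d_G(x_i,q_j)^2$ and $b_j=d_G(x_i',q_j)^2$ reduces the problem to bounding $\bigl|d_G(x_i,q_j)^2-d_G(x_i',q_j)^2\bigr|$ for each fixed $j$. Then I would factor the difference of squares,
\[
\bigl|d_G(x_i,q_j)^2-d_G(x_i',q_j)^2\bigr|
=\bigl|d_G(x_i,q_j)-d_G(x_i',q_j)\bigr|\cdot\bigl(d_G(x_i,q_j)+d_G(x_i',q_j)\bigr).
\]
The first factor is at most $d_G(x_i,x_i')\le\varepsilon$ by the triangle inequality for the geodesic metric, and the second factor is at most $2\pi$ since $d_G(\cdot,\cdot)\le\pi$ on $\mathbb{S}^2$. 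Hence each summand is at most $2\pi\varepsilon$, and averaging over $i=1,\dots,M$ yields $\bigl|V_{n,2}(P;Q)-V_{n,2}(P';Q)\bigr|\le 2\pi\varepsilon$.

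The argument is essentially routine, so there is no serious obstacle; the two places that deserve a line of care are the $1$--Lipschitz--in--$\min$ step (which is exactly what lets the same constant survive even though the nearest--neighbor assignment may differ between $P$ and $P'$) and the bookkeeping of the constant, where the factor $2$ produced by the difference of squares combines with the diameter bound $\pi$ to give $2\pi$. Note that neither compactness of $(\mathbb{S}^2)^n$ nor any optimality property of $Q$ is used, so the estimate holds for every admissible $Q$ with $|Q|\le n$; taking the infimum over all such $Q$ afterwards gives the corresponding stability bound $\bigl|V_{n,2}(P)-V_{n,2}(P')\bigr|\le 2\pi\varepsilon$ for the optimal distortions themselves.
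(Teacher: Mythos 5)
Your argument is correct and is essentially the paper's own proof: both reduce to the pointwise bound $\bigl|\operatorname{dist}_Q(x)^2-\operatorname{dist}_Q(y)^2\bigr|\le 2\pi\, d_G(x,y)$ via the non-expansiveness of the pointwise minimum over a finite family, the difference-of-squares factorization, and the diameter bound $d_G\le\pi$, followed by averaging over $i$. The only cosmetic difference is the order of operations (you apply the ``$\min$ is $1$-Lipschitz'' step to the squared distances directly, whereas the paper first shows $\operatorname{dist}_Q$ itself is $1$-Lipschitz and then squares); the closing remark about $\bigl|V_{n,2}(P)-V_{n,2}(P')\bigr|\le 2\pi\varepsilon$ is a correct extra observation corresponding to the paper's Corollary~\ref{cor:continuity-optimal}.
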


\begin{proof}
Define, for each $x\in\mathbb{S}^2$,
\[
\operatorname{dist}_Q(x):=\min_{1\le k\le |Q|} d_G(x,q_k).
\]
Since $Q$ is finite, the minimum is attained for every $x$, so $\operatorname{dist}_Q$ is well defined.
With this notation,
\[
V_{n,2}(P;Q)=\frac1M\sum_{i=1}^M \operatorname{dist}_Q(x_i)^2,
\qquad
V_{n,2}(P';Q)=\frac1M\sum_{i=1}^M \operatorname{dist}_Q(x_i')^2.
\]

\smallskip
\noindent\textit{Step 1 (Lipschitz property of $\operatorname{dist}_Q$).}
For any fixed $q\in Q$ and any $x,y\in\mathbb{S}^2$, the triangle inequality gives
$|\,d_G(x,q)-d_G(y,q)\,| \le d_G(x,y)$.
Taking the pointwise minimum over the finite set $Q$ preserves the $1$–Lipschitz constant; hence
\begin{equation}\label{eq:lip}
\bigl|\operatorname{dist}_Q(x)-\operatorname{dist}_Q(y)\bigr|
\;\le\; d_G(x,y) \qquad \text{for all }x,y\in\mathbb{S}^2.
\end{equation}

\smallskip
\noindent\textit{Step 2 (Difference of squares).}
Set $a_i:=\operatorname{dist}_Q(x_i)$ and $a_i':=\operatorname{dist}_Q(x_i')$.
By \eqref{eq:lip} and the perturbation assumption, $|a_i'-a_i|\le d_G(x_i',x_i)\le \varepsilon$.
Also, $0\le a_i,a_i'\le \pi$ since $d_G(\cdot,\cdot)\le \pi$ on $\mathbb{S}^2$.
Thus
\[
\bigl|a_i'^2-a_i^2\bigr|
=|a_i'-a_i|\,(a_i'+a_i)
\;\le\; \varepsilon\,(a_i'+a_i)
\;\le\; 2\pi\,\varepsilon.
\]

\smallskip
\noindent\textit{Step 3 (Average over $i$).}
Summing the previous bound over $i$ and dividing by $M$ yields
\[
\bigl|V_{n,2}(P';Q)-V_{n,2}(P;Q)\bigr|
=\Bigl|\frac1M\sum_{i=1}^M\bigl(a_i'^2-a_i^2\bigr)\Bigr|
\;\le\;\frac1M\sum_{i=1}^M 2\pi\,\varepsilon
\;=\; 2\pi\,\varepsilon.
\]
This proves the Lipschitz stability of the distortion.
\end{proof}
\begin{remark}[On the non-sharpness of the Lipschitz bound]
The Lipschitz constant $2\pi$ appearing in Lemma~\ref{lem:distortion-stability} is deliberately non-optimal and reflects a global worst--case estimate on the sphere. Indeed, the bound arises from the inequalities
\[
0 \le d_G(x,q) \le \pi \quad \text{for all } x,q \in S^2,
\]
together with the elementary estimate
\[
|a^2-b^2| \le |a-b|\,(a+b),
\]
applied uniformly over the entire sphere. This argument ignores any finer geometric structure of the Voronoi partition induced by the configuration $Q$.

In typical quantization configurations, however, each Voronoi cell is contained in a geodesic ball of radius strictly smaller than $\pi$, often much smaller in practice. If one defines
\[
R(Q) := \max_{1\le j\le |Q|}\ \sup_{x\in V_j(Q)} d_G(x,q_j),
\]
the maximal cluster radius associated with the configuration $Q$, then the proof of Lemma~\ref{lem:distortion-stability} immediately yields the refined estimate
\[
\big| V_{n,2}(P;Q) - V_{n,2}(P';Q) \big| \le 2\,R(Q)\,\varepsilon,
\]
which improves the constant whenever $R(Q) < \pi$. In particular, for centroidal Voronoi configurations arising from well-distributed data, $R(Q)$ is typically bounded away from $\pi$.

More generally, stability estimates for quantization functionals are closely related to stability properties of empirical measures under perturbations and to continuity of minimizers of Fr\'echet-type functionals on metric spaces. Such refinements are well known in Euclidean quantization theory and in the study of intrinsic means on Riemannian manifolds; see, for example, Graf and Luschgy~\cite{{GrafLuschgy2000}}, Karcher~\cite{Karcher1977}, and Pennec~\cite{Pennec2006}. Developing sharper, data-dependent Lipschitz constants in the spherical setting would require detailed control of Voronoi geometry and cluster diameters, which lies beyond the scope of the present paper.  
\end{remark}

\begin{corollary}[Continuity of Optimal Quantizers]\label{cor:continuity-optimal}
Let $\{P^{(m)}\}_{m\ge 1}$ be a sequence of empirical (uniform) probability measures on $\mathbb{S}^2$,
\[
P^{(m)}=\frac{1}{M}\sum_{i=1}^{M}\delta_{x^{(m)}_{i}},
\]
whose supports $X^{(m)}=\{x^{(m)}_1,\dots,x^{(m)}_M\}$ converge to
$X=\{x_1,\dots,x_M\}$ in the sense that
\[
\varepsilon_m := \max_{1\le i\le M} d_G\big(x^{(m)}_{i},\,x_{i}\big) \longrightarrow 0 
\qquad \text{as } m\to\infty.
\]
Let $P=\frac{1}{M}\sum_{i=1}^{M}\delta_{x_i}$ denote the limiting empirical measure.

Then, for every fixed $n\ge 1$, the following hold:

\begin{enumerate}\itemsep6pt
\item The optimal distortion values converge:
\[
V_{n,2}\big(P^{(m)}\big) \longrightarrow V_{n,2}(P) \qquad \text{as } m\to\infty.
\]

\item Let $Q^{(m),*}$ be an optimal set of $n$–means for $P^{(m)}$. 
Then, after possibly reordering the points in each $Q^{(m),*}$, one can extract a subsequence 
that converges to a limiting set $Q^{*}$, and this limiting set $Q^{*}$ is an optimal set of $n$–means for $P$.
\end{enumerate}
\end{corollary}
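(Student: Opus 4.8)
The plan is to combine the Lipschitz stability of the distortion functional (Lemma~\ref{lem:distortion-stability}) with a compactness argument on $(\D S^2)^n$. For part~(1), fix $n$ and let $Q^{(m),*}$ be optimal for $P^{(m)}$ and $Q^*$ optimal for $P$. On the one hand, using $Q^*$ as a (generally suboptimal) competitor for $P^{(m)}$ and applying Lemma~\ref{lem:distortion-stability} with the pair $(P,P^{(m)})$ and perturbation bound $\varepsilon_m$, one gets $V_{n,2}(P^{(m)})\le V_{n,2}(P^{(m)};Q^*)\le V_{n,2}(P;Q^*)+2\pi\varepsilon_m = V_{n,2}(P)+2\pi\varepsilon_m$. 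Symmetrically, using $Q^{(m),*}$ as a competitor for $P$, one gets $V_{n,2}(P)\le V_{n,2}(P;Q^{(m),*})\le V_{n,2}(P^{(m)};Q^{(m),*})+2\pi\varepsilon_m = V_{n,2}(P^{(m)})+2\pi\varepsilon_m$. Combining the two inequalities yields $|V_{n,2}(P^{(m)})-V_{n,2}(P)|\le 2\pi\varepsilon_m\to 0$, which proves~(1). Note this step uses nothing beyond Lemma~\ref{lem:distortion-stability} and the definition of the infimum, so it is essentially immediate.

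For part~(2), pad each optimal set to an ordered $n$-tuple (if some $Q^{(m),*}$ has fewer than $n$ points, repeat points) and view it as a point of the compact space $(\D S^2)^n$. By sequential compactness, after reordering coordinates and passing to a subsequence $m_\ell$, we have $Q^{(m_\ell),*}\to Q^*$ in $(\D S^2)^n$, where $Q^*=\{q_1^*,\dots,q_n^*\}$ (with possible repetitions, so $|Q^*|\le n$). It remains to show $Q^*$ is optimal for $P$. Write $F_P(Q):=V_{n,2}(P;Q)=\frac1M\sum_i \min_j d_G(x_i,q_j)^2$, which is jointly continuous in $(x_1,\dots,x_M;q_1,\dots,q_n)$ because $d_G$ is continuous and $\min$ of continuous functions is continuous. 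Then
\[
V_{n,2}(P^{(m_\ell)}) = F_{P^{(m_\ell)}}(Q^{(m_\ell),*})
= \frac1M\sum_{i=1}^M \min_{1\le j\le n} d_G\big(x^{(m_\ell)}_i, q^{(m_\ell),*}_j\big)^2
\longrightarrow \frac1M\sum_{i=1}^M \min_{1\le j\le n} d_G(x_i,q^*_j)^2 = F_P(Q^*),
\]
since both the support points and the codepoints converge. By part~(1) the left side also converges to $V_{n,2}(P)$, so $F_P(Q^*)=V_{n,2}(P)$, i.e., $Q^*$ is an optimal set of $n$-means for $P$. This completes~(2).

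The only genuinely delicate point is the bookkeeping in part~(2): the optimal sets $Q^{(m),*}$ may have fewer than $n$ distinct points, their points may be listed in an order that jumps around with $m$, and the labels assigning support points to representatives can change along the sequence. The fix is purely notational — work with ordered $n$-tuples in $(\D S^2)^n$ rather than with sets, allow repeated entries (which is harmless since $V_{n,2}(P;Q)$ only depends on the underlying set and adding repeated points never increases the distortion), and invoke the continuity of $F$ as a function of all $M+n$ arguments so that no explicit tracking of the nearest-neighbor assignment is needed. Once this is set up, no further estimate beyond Lemma~\ref{lem:distortion-stability} and the sequential compactness of $(\D S^2)^n$ is required; one should also remark that the limit $Q^*$ need not be unique, consistent with Remark~3.2 on symmetry.
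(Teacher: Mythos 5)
Your proof is correct and follows essentially the same path as the paper: Lemma~\ref{lem:distortion-stability} for part~(1), and compactness of $(\D S^2)^n$ plus continuity of the distortion functional for part~(2). The only variations are cosmetic: in part~(1) you plug in the optimal $Q^*$ for $P$ directly (which yields the sharper quantitative bound $|V_{n,2}(P^{(m)})-V_{n,2}(P)|\le 2\pi\varepsilon_m$), whereas the paper uses a $\delta$-near-optimal competitor $Q_\delta$ before sending $\delta\to 0$; and in part~(2) you invoke joint continuity of $(x_1,\dots,x_M,q_1,\dots,q_n)\mapsto \frac{1}{M}\sum_i\min_j d_G(x_i,q_j)^2$ in a single step, whereas the paper reaches the same conclusion by combining continuity in $Q$ at fixed $P$ with a second application of Lemma~\ref{lem:distortion-stability} to pass from $P^{(m)}$ to $P$. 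Both formulations are equivalent and your bookkeeping for padding and reordering matches the paper's.
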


\begin{proof}
Let $Q\subset \D S^2$ with $|Q|\le n$ be arbitrary. By Lemma~\ref{lem:distortion-stability}, for each $m$ we have
\[
\big|V_{n,2}(P^{(m)};Q)-V_{n,2}(P;Q)\big|\le 2\pi\,\varepsilon_m,
\]
where $\varepsilon_m:=\max_{1\le i\le M} d_G(x_i^{(m)},x_i)\to 0$.

\smallskip
\noindent\textbf{(1) Convergence of optimal values.}
Fix $\delta>0$ and choose $Q_\delta\subset \D S^2$ with $|Q_\delta|\le n$ such that
\[
V_{n,2}(P;Q_\delta)\le V_{n,2}(P)+\delta.
\]
Then for every $m$,
\[
V_{n,2}(P^{(m)})\le V_{n,2}(P^{(m)};Q_\delta)
\le V_{n,2}(P;Q_\delta)+2\pi\varepsilon_m
\le V_{n,2}(P)+\delta+2\pi\varepsilon_m.
\]
Taking $\limsup_{m\to\infty}$ and using $\varepsilon_m\to 0$ yields
\[
\limsup_{m\to\infty} V_{n,2}(P^{(m)})\le V_{n,2}(P)+\delta.
\]
Since $\delta>0$ is arbitrary, $\limsup_{m\to\infty} V_{n,2}(P^{(m)})\le V_{n,2}(P)$.

For the reverse inequality, fix $m$ and let $Q^{(m),*}$ be optimal for $P^{(m)}$, so
$V_{n,2}(P^{(m)})=V_{n,2}(P^{(m)};Q^{(m),*})$. Applying Lemma~\ref{lem:distortion-stability} again,
\[
V_{n,2}(P)\le V_{n,2}(P;Q^{(m),*})
\le V_{n,2}(P^{(m)};Q^{(m),*})+2\pi\varepsilon_m
=V_{n,2}(P^{(m)})+2\pi\varepsilon_m.
\]
Taking $\liminf_{m\to\infty}$ gives $V_{n,2}(P)\le \liminf_{m\to\infty}V_{n,2}(P^{(m)})$.
Combining both bounds yields
\[
V_{n,2}(P^{(m)})\longrightarrow V_{n,2}(P).
\]

\smallskip
\noindent\textbf{(2) Convergence of a subsequence of optimal configurations.}
For each $m$, choose an optimal configuration
$Q^{(m),*}=\{q^{(m)}_1,\dots,q^{(m)}_n\}\subset \D S^2$ (padding with repetitions if necessary so that
it is an $n$-tuple). Since $(\D S^2)^n$ is compact, there exist a subsequence (still denoted $m$) and
points $q^*_1,\dots,q^*_n\in \D S^2$ such that, after reordering the points in each $Q^{(m),*}$,
\[
(q^{(m)}_1,\dots,q^{(m)}_n)\longrightarrow (q^*_1,\dots,q^*_n)\quad \text{in }(\D S^2)^n.
\]
Let $Q^*:=\{q^*_1,\dots,q^*_n\}$. The mapping $Q\mapsto V_{n,2}(P;Q)$ is continuous on $(\D S^2)^n$
because it is a finite sum of continuous functions and the pointwise minimum over finitely many
continuous functions is continuous. Hence
\[
V_{n,2}(P;Q^{(m),*})\longrightarrow V_{n,2}(P;Q^*).
\]
Using Lemma~\ref{lem:distortion-stability} once more,
\[
\big|V_{n,2}(P^{(m)};Q^{(m),*})-V_{n,2}(P;Q^{(m),*})\big|\le 2\pi\varepsilon_m\to 0,
\]
so $V_{n,2}(P^{(m)};Q^{(m),*})\to V_{n,2}(P;Q^*)$. But
$V_{n,2}(P^{(m)};Q^{(m),*})=V_{n,2}(P^{(m)})\to V_{n,2}(P)$ by part (1). Therefore
$V_{n,2}(P;Q^*)=V_{n,2}(P)$, i.e.\ $Q^*$ is an optimal set of $n$-means for $P$.
\end{proof}
\begin{remark}\label{rem:stability-significance}
Corollary~\ref{cor:continuity-optimal} shows that optimal quantizers depend continuously on the 
underlying data points: if the support of $P^{(m)}$ changes only slightly, then the optimal 
distortion and the corresponding optimal sets of $n$–means also change only slightly. 
In other words, optimal quantizers are stable under small perturbations of the data.

This stability property is important both theoretically and in practice. 
On the theoretical side, it guarantees that quantizers do not exhibit sudden jumps or 
discontinuous behaviour when the data are slightly modified. 
From a practical perspective, real spherical datasets often contain measurement noise or 
numerical errors; the stability result ensures that such perturbations do not significantly 
affect the quality of the computed quantizers, and the algorithmic output remains reliable.
\end{remark}

\section{Algorithmic Construction of Optimal $n$--Means} \label{sec6}

In this section, we describe an iterative procedure for computing an optimal configuration of $n$--means
for a given finite discrete uniform distribution $P$ on $\D S^2$. Since the geometry of the sphere is
non--Euclidean, the classical Lloyd algorithm cannot be applied directly. In particular, the notion of
the mean must be replaced by the intrinsic (Karcher) mean on $\D S^2$, and Voronoi partitions must be
defined using the geodesic distance.

The procedure described below is a natural spherical analogue of Lloyd's method. It alternates between
updating the Voronoi partition and relocating the representatives to the intrinsic means of their associated
clusters. This iterative scheme typically converges rapidly in practice and serves as an effective computational
tool for approximating optimal $n$--means on the sphere.

\medskip

\subsection{Lloyd--Type Algorithm on the Sphere}

We first describe the iterative algorithm, which is a natural spherical analogue of Lloyd’s method.  
The procedure alternates between Voronoi partition updates and intrinsic mean updates with respect to the geodesic distance.


Let $ {Q}^{(0)} = \{ {q}^{(0)}_1, \ldots,  {q}^{(0)}_n\} \subset \D S^2$ be an initial configuration. For each iteration
$r \ge 0$, perform the following steps:

\begin{itemize}
\item[(i)] \textbf{Voronoi partition step:} Assign each support point $ {x}_i \in  {X}$ to its nearest representative
in $ {Q}^{(r)}$ with respect to the geodesic distance. This gives the clusters
\[
 {X}_j( {Q}^{(r)}) = \{ {x}_i \in  {X} : d_G( {x}_i,  {q}^{(r)}_j) \le d_G( {x}_i,  {q}^{(r)}_k)
\ \text{for all } k\}.
\]
This step induces the spherical Voronoi tessellation of the support $ {X}$ relative to the current
configuration $ {Q}^{(r)}$.

\item[(ii)] \textbf{Centroid update step:} Move each representative to the intrinsic (Karcher) mean of its current
cluster:
\[
 {q}^{(r+1)}_j = \operatorname*{arg\,min}_{ {q} \in \D S^2} \sum_{ {x} \in  {X}_j( {Q}^{(r)})} d_G( {x},  {q})^2.
\]
This update ensures the smallest possible sum of squared geodesic distances between the representative
and the points in its cluster, analogous to replacing a representative by the arithmetic mean in the
Euclidean Lloyd algorithm.
\end{itemize}

The iteration is terminated when $ {Q}^{(r+1)} =  {Q}^{(r)}$ (i.e., a fixed point is reached), or when the decrease in
the distortion $V_{n,2}(P; {Q}^{(r)})$ falls below a prescribed tolerance. In practice, the method converges
in only a few iterations, especially when the initial configuration is reasonably well distributed over the sphere.

\medskip
\subsection{Monotonicity and Fixed--Point Properties}

We now record the basic analytic properties of the underlying distortion functional and geometric mappings driving the algorithm.

\begin{lemma}[Monotonic decrease of distortion]
At each iteration step, the distortion does not increase:
\[
V_{n,2}(P; {Q}^{(r+1)}) \le V_{n,2}(P; {Q}^{(r)}).
\]
\end{lemma}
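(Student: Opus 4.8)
The plan is to run the standard two--phase ``partition--then--recenter'' argument, adapted to the geodesic setting, by splitting each Lloyd iteration into its two constituent steps and verifying that neither step can raise the distortion.

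First I would fix the notation $X_j := X_j(Q^{(r)})$ for the Voronoi clusters produced by step (i), so that by construction every $x\in X_j$ satisfies $d_G(x,q_j^{(r)})\le d_G(x,q_k^{(r)})$ for all $k$; ties on cell boundaries are broken by any fixed rule and are harmless since the competing distances are then equal. Because $\{X_j\}$ is precisely the nearest--neighbour partition for $Q^{(r)}$, one has the identity
\[
V_{n,2}(P;Q^{(r)}) \;=\; \frac1M \sum_{j=1}^{n}\sum_{x\in X_j} d_G(x,q_j^{(r)})^2 .
\]
Next, the centroid update step (ii) defines $q_j^{(r+1)}$ to be the intrinsic (Karcher) mean of $X_j$, i.e.\ the minimizer of $q\mapsto \sum_{x\in X_j} d_G(x,q)^2$ over $\D S^2$ (well defined under the standing radius assumption; if $X_j=\varnothing$ one simply keeps $q_j^{(r)}$ unchanged). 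Testing this minimizer against the competitor $q=q_j^{(r)}$ gives $\sum_{x\in X_j} d_G(x,q_j^{(r+1)})^2\le \sum_{x\in X_j} d_G(x,q_j^{(r)})^2$ for each $j$; summing over $j$ and dividing by $M$ yields
\[
\frac1M\sum_{j=1}^{n}\sum_{x\in X_j} d_G(x,q_j^{(r+1)})^2 \;\le\; V_{n,2}(P;Q^{(r)}).
\]

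Finally I would pass from this ``fixed--partition'' quantity back to the genuine distortion of $Q^{(r+1)}$: for each $x\in X_j$ the nearest point of $Q^{(r+1)}$ is at geodesic distance at most $d_G(x,q_j^{(r+1)})$, so
\[
V_{n,2}(P;Q^{(r+1)}) \;=\; \frac1M\sum_{i=1}^{M}\min_{1\le j\le n} d_G(x_i,q_j^{(r+1)})^2 \;\le\; \frac1M\sum_{j=1}^{n}\sum_{x\in X_j} d_G(x,q_j^{(r+1)})^2 .
\]
Chaining the three displays gives $V_{n,2}(P;Q^{(r+1)})\le V_{n,2}(P;Q^{(r)})$, which is the claim.

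There is no real analytic difficulty in this argument; it is the manifold analogue of the classical Euclidean Lloyd monotonicity. The only points that need genuine care are (a) invoking the standing radius assumption so that the Karcher means appearing in step (ii) actually exist and are the asserted global minimizers of the intra--cluster squared--distance sum, and (b) the bookkeeping for empty clusters and for ties at Voronoi boundaries, both of which are handled by fixing conventions in advance. If anything is the ``main obstacle'' it is purely expository: making sure the two inequalities are attributed to the correct step (recentering versus reassignment) and that the intermediate ``old clusters, new centres'' configuration is clearly distinguished from both $Q^{(r)}$ and $Q^{(r+1)}$.
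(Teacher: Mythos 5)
Your proof is correct and follows essentially the same two--phase ``reassign, then recenter'' argument that the paper uses; you simply spell out the chain of three displayed inequalities (exact decomposition under the current Voronoi partition, recentering inequality from the Karcher--mean minimization, reassignment inequality from taking the min over the new codebook) that the paper's more telegraphic proof only describes verbally. Your version is, if anything, more careful than the paper's on the small bookkeeping points (empty clusters, tie--breaking, and the standing radius assumption guaranteeing the intrinsic means are genuine minimizers).
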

\begin{proof}
Fix $r \ge 0$. Step (i) of the iteration assigns each $ {x}_i$ to the nearest representative in $ {Q}^{(r)}$,
which minimizes the contribution of $ {x}_i$ to the distortion for that iteration. Step (ii) then replaces
each $ {q}^{(r)}_j$ by the intrinsic mean of its cluster, which minimizes the sum of squared distances from
the points in that cluster to the representative. Therefore, both steps do not increase, and in fact strictly
decrease the overall distortion whenever $ {Q}^{(r+1)} \neq  {Q}^{(r)}$, i.e.,  unless a fixed point has been reached.  
\end{proof} 
 
\subsection{Convergence Properties}

We next state the main convergence result for the iterative scheme.

\begin{theorem} \label{Algorithem}
Every accumulation point of the sequence $\{ {Q}^{(r)}\}$ generated by the above algorithm is a centroidal
configuration for $P$. In particular, if the sequence converges, then its limit is a centroidal configuration.
\end{theorem}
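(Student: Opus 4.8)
The plan is to exploit the two structural facts already proved: the distortion sequence $V_{n,2}(P;Q^{(r)})$ is monotonically nonincreasing (Lemma on monotonic decrease) and bounded below by $0$, hence convergent; and the iteration map is built from two partial minimizations. I would first set up notation: write $D_r := V_{n,2}(P;Q^{(r)})$, so $D_r \downarrow D_\infty$ for some $D_\infty \ge 0$, and therefore $D_r - D_{r+1} \to 0$. Next, let $Q^*$ be an accumulation point of $\{Q^{(r)}\}$ in the compact space $(\mathbb S^2)^n$, say $Q^{(r_\ell)} \to Q^*$ along a subsequence; by relabeling within each tuple (only finitely many orderings) I may assume coordinatewise convergence $q^{(r_\ell)}_j \to q^*_j$. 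Since $P$ has finite support, there are only finitely many possible nearest-neighbor partitions of $X$, so along a further subsequence the partition $\{X_j(Q^{(r_\ell)})\}$ is a fixed partition, call it $\{A_j\}$; and because the assignment at step (i) is by $\le$ (ties broken consistently) and distances depend continuously on $Q$, the limiting partition $\{A_j\}$ is compatible with the nearest-neighbor rule for $Q^*$, i.e.\ each $x \in A_j$ satisfies $d_G(x,q^*_j) \le d_G(x,q^*_k)$ for all $k$ — so $A_j \subseteq X_j(Q^*)$ up to ties.

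The heart of the argument is to show $q^*_j$ is the intrinsic mean of $X_j(Q^*)$ for each $j$. I would argue as follows. One full iteration starting from $Q^{(r_\ell)}$ produces $Q^{(r_\ell+1)}$, whose $j$th point is the Karcher mean of $A_j$ (the partition along the subsequence), call it $c_j(A_j) := \arg\min_{q}\sum_{x\in A_j} d_G(x,q)^2$. Define $g(Q) := \sum_j \sum_{x \in A_j} d_G(x, q_j)^2 = M\cdot V_{n,2}(P; Q)$ when $Q$ induces the partition $\{A_j\}$ (which $Q^{(r_\ell)}$ does). Then
\[
M\, D_{r_\ell} = g(Q^{(r_\ell)}) \ge g\big((c_1(A_1),\dots,c_n(A_n))\big) \ge M\, D_{r_\ell+1} \ge M\, D_{r_\ell+2} \ge \cdots,
\]
and since $D_r$ converges, $g(Q^{(r_\ell)}) - g((c_j(A_j))_j) \to 0$ along the subsequence. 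Passing to the limit $\ell \to \infty$ using continuity of $g$ and of the Karcher-mean map $A \mapsto c_j(A)$ on its domain (valid by the standing radius-$<\pi/2$ assumption, invoking Karcher/Afsari uniqueness and continuous dependence), I get $g(Q^*) = g((c_j(A_j))_j)$, i.e.\ $\sum_{x\in A_j} d_G(x,q^*_j)^2 = \min_q \sum_{x\in A_j} d_G(x,q)^2$ for every $j$. By uniqueness of the Karcher mean this forces $q^*_j = c_j(A_j)$, the intrinsic mean of its cluster. Combined with the fact that $\{A_j\}$ is the nearest-neighbor partition for $Q^*$, this says exactly that $Q^*$ is centroidal. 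The final sentence of the theorem is then immediate: if the whole sequence converges, its unique accumulation point is the limit, hence centroidal.

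The main obstacle I anticipate is the careful handling of ties and of the partition along the subsequence — making rigorous that the limiting partition $\{A_j\}$ is simultaneously (a) the one realized by infinitely many $Q^{(r_\ell)}$, (b) a valid nearest-neighbor partition for $Q^*$, and (c) the one whose Karcher means appear in $Q^{(r_\ell+1)}$. The clean way is to pass to a sub-subsequence on which the partition is literally constant (possible since there are finitely many partitions of the finite set $X$), do all the continuity arguments there, and only at the end note that any nearest-neighbor partition for $Q^*$ gives the same distortion value, so "centroidal" is unambiguous. A secondary technical point is justifying continuity of the Karcher-mean map at $Q^*$: this requires that the relevant clusters stay within a geodesic ball of radius $<\pi/2$, which is exactly the standing assumption recorded earlier, so I would simply cite it (and Afsari~\cite{Afsari2011} for the continuous dependence). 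Everything else — compactness, monotone convergence, continuity of $V_{n,2}(P;\cdot)$ — is routine and already available in the excerpt.
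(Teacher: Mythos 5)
Your proof is correct and follows the same high-level strategy as the paper: extract a convergent subsequence, exploit that the distortion sequence is monotone and bounded (hence convergent), and pass to the limit to obtain the centroidal property. However, the paper's own proof is only a sketch — it merely asserts that ``in the limit, Step (i) implies nearest-neighbor assignment and Step (ii) implies each $\widehat q_j$ is the intrinsic mean of its cluster,'' without justifying either passage to the limit. Your argument supplies exactly the missing content: (a) you pass to a sub-subsequence on which the nearest-neighbor partition $\{A_j\}$ of the finite set $X$ is literally constant, which is what makes the limit arguments clean; (b) you introduce the functional $g(Q)=\sum_j\sum_{x\in A_j}d_G(x,q_j)^2$ and squeeze it between $MD_{r_\ell}$ and $MD_{r_\ell+1}$ to conclude $g(Q^*)=g\bigl((c_j(A_j))_j\bigr)$, which is the key step the paper omits; and (c) you invoke uniqueness of the Karcher mean (via the standing radius-$<\pi/2$ assumption) to upgrade the termwise equality of minima to $q^*_j=c_j(A_j)$. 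One small point: once the partition is frozen along the sub-subsequence, each $c_j(A_j)$ is a fixed point, so your appeal to continuity of the Karcher-mean map $A\mapsto c_j(A)$ is not actually needed — only continuity of $g$ enters. Also, your acknowledgment of the tie issue is apt; the paper defines $X_j(Q)$ with $\le$ and thus permits overlapping clusters, leaving ``the cluster'' mildly ambiguous, a gap the paper's own proof likewise does not close. On balance, your write-up is a more rigorous version of the same argument and addresses the real technical obstacle (control of the partition across the subsequence), which the paper elides.
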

\medskip

\begin{proof}
Let $ {Q}^{(r_\ell)}$ be a convergent subsequence with limit
$\widehat{ {Q}} = \{\widehat{ {q}}_1, \ldots, \widehat{ {q}}_n\}$. Since each step of the iteration does not
increase the distortion, the sequence $\{V_{n,2}(P; {Q}^{(r)})\}$ is monotone decreasing and bounded below
by $0$, and hence convergent. In the limit, Step (i) implies that each $\widehat{ {x}} \in  {X}$ is assigned to
its nearest representative in $\widehat{ {Q}}$, and Step (ii) implies that each $\widehat{ {q}}_j$ is the intrinsic
mean of its cluster. Therefore, $\widehat{ {Q}}$ satisfies the centroidal property and is a fixed point of the
algorithm. 
\end{proof} 

\subsection{Remarks}

We conclude the section with remarks concerning convergence behavior, initialization, and possible extensions.

\begin{remark}
Theorem~\ref{Algorithem} shows that the iterative scheme converges (at least subsequentially) to a centroidal configuration,
which is a necessary condition for optimality. In practice, the algorithm typically converges to an optimal
configuration, especially when initialized with a reasonably well distributed set of points. However, as with all
Lloyd--type methods, the algorithm may converge to a local minimum rather than a global one.
\end{remark}

\begin{remark}
The choice of the initial configuration $ {Q}^{(0)}$ can have a significant effect on the performance of the
algorithm. A suitable initial configuration can accelerate convergence and help avoid poor local minima. Common
strategies include random initialization (uniformly on the sphere or stratified by latitude), deterministic spherical
codes, or taking the output of a coarse quantization run with fewer representatives. In applications, one often runs
the algorithm multiple times with different initializations and selects the configuration with the smallest final
distortion.
\end{remark}

\begin{remark}
Although the algorithm is described here for the uniform measure on a finite support, it extends naturally to more
general discrete and continuous distributions on $\D S^2$, provided the intrinsic mean can be computed at each
iteration. For continuous distributions, the cluster update step involves integration rather than summation, and is
typically approximated using numerical quadrature or Monte Carlo sampling.
\end{remark} 

\begin{remark} (Convergence rate).
While the iterative scheme given in this section is guaranteed to produce a monotone decrease in the
distortion and exhibits rapid convergence in practice, a quantitative convergence-rate analysis
relating the number of iterations to distortion reduction in the intrinsic spherical setting
is a delicate problem and is left for future investigation.
\end{remark}

 \section{Numerical Examples and Implementation Results} \label{sec7}

In this section, we illustrate the concepts developed in the previous sections through several examples of optimal $n$--means on $\mathbb{S}^2$. The aim is threefold: 
(i) to highlight the geometric structure and symmetry of optimal configurations for small values of $n$, 
(ii) to demonstrate how the iterative algorithm of Section~6 behaves in practice on discrete datasets, 
and (iii) to build intuition for the geometry of centroidal Voronoi tessellations on the sphere.

\medskip
\noindent\textbf{Standing Convention.}
Throughout this section, we consider a finite dataset 
\[
X=\{x_1,\dots,x_M\}\subset\mathbb{S}^2,
\]
and the associated discrete uniform probability measure
\[
P \;=\; \frac{1}{M}\sum_{i=1}^{M} \delta_{x_i}.
\]
For a configuration $Q=\{q_1,\dots,q_n\}\subset\mathbb{S}^2$, the empirical distortion is
\[
V_{n,2}(P;Q)
\;=\;
\frac{1}{M}\sum_{i=1}^{M} \min_{1\le j\le n} d_G(x_i,q_j)^2,
\qquad
V_{n,2}(P) \;=\; \min_{|Q|\le n} V_{n,2}(P;Q).
\]
All reported distortion values in this section are computed with respect to the above discrete uniform measure (i.e., using finite sums over $X$, not integrals over $\mathbb{S}^2$).

\begin{exam}[Optimal 2--means]\label{ex:2means}
Consider $n=2$. Starting from an arbitrary initialization $Q^{(0)}=\{q^{(0)}_1,q^{(0)}_2\}$, the Lloyd--type iteration of Section~6 converges to two antipodal representatives on $\mathbb{S}^2$, e.g.\ 
\[
Q^*=\bigl\{(\tfrac{\pi}{2},\theta_0),\,(-\tfrac{\pi}{2},\theta_0)\bigr\}
\]
in spherical coordinates $(\phi,\theta)$ for some $\theta_0\in[0,2\pi)$. 
The corresponding empirical distortion is
\[
V_{2,2}(P)=\frac{\pi^2}{4}.
\]
This reflects that the best two representatives split the sphere into two hemispheres with equal measure.
\end{exam}
\subsection*{Explanation of Example~7.1.}
This example considers a fully irregular finite dataset \(X \subset \D S^2\),
with no imposed symmetry, ring structure, or uniform spacing.
The purpose is to illustrate that the centroidal Voronoi characterization
(Theorem~\ref{thm:char}) and the spherical Lloyd--type algorithm of Section~6
remain valid beyond highly structured or symmetric configurations.

Although the underlying data are irregular, the uniform empirical measure
on \(X\) distributes mass approximately evenly over the sphere.
As a result, any centroidal Voronoi partition associated with two representatives
must divide the support into two clusters of comparable total mass.
The only configuration capable of achieving such a balanced partition
on the sphere is a pair of antipodal points, which induces a decomposition
of \(\D S^2\) into two hemispherical Voronoi regions.

From the variational viewpoint, each representative is the intrinsic (Karcher)
mean of its assigned cluster, and the Lloyd iteration converges to a stationary
point of the distortion functional.
In this case, the antipodal configuration realizes the global minimum of the
empirical distortion, yielding
\[
V_{2,2}(P) = \frac{\pi^2}{4}.
\]

This example demonstrates that, even in the absence of geometric regularity
in the data, the optimal quantizers are governed by the global geometry of
\(\D S^2\) and the centroidal Voronoi principle, rather than by local symmetry
of the support.
\begin{exam}[Optimal 3--means]\label{ex:3means}
For $n=3$, the optimal configuration consists of three points placed on the equator, equally spaced by $120^\circ$ in longitude. One such configuration is
\[
Q^*=\bigl\{(0,0),\,(0,\tfrac{2\pi}{3}),\,(0,\tfrac{4\pi}{3})\bigr\}.
\]
The empirical distortion with respect to the discrete uniform measure $P$ is
\[
V_{3,2}(P)=\frac{\pi^2}{3}.
\]
Here, the three codepoints partition the equatorial band into three congruent spherical wedges, illustrating the role of rotational symmetry.
\end{exam}
\subsection*{Explanation of Example~7.2.}
In this example, the regularity assumption within each ring is relaxed, leading
to an irregular multi-ring dataset. Although the points are no longer equally
spaced, the example shows that the qualitative features of optimal quantization
persist: Voronoi clusters remain localized, representatives adapt to the local
geometry via intrinsic means, and the Lloyd-type algorithm converges to a stable
configuration. This illustrates the robustness of the theory beyond the idealized
uniform setting. This confirms that the qualitative structure predicted by the ring–allocation theorem persists even when exact equi-spacing is broken.
\begin{exam}[Optimal 4--means]\label{ex:4means}
For $n=4$, the optimal set of representatives forms the vertices of a regular tetrahedron inscribed in $\mathbb{S}^2$. One convenient spherical coordinate description is
\[
\Bigl(\phi,\theta\Bigr)\in
\Bigl\{\bigl(\arctan(\tfrac{1}{\sqrt{2}}),0\bigr),\;
\bigl(-\arctan(\tfrac{1}{\sqrt{2}}),\pi\bigr),\;
\bigl(\arctan(\tfrac{1}{\sqrt{2}}),\tfrac{2\pi}{3}\bigr),\;
\bigl(-\arctan(\tfrac{1}{\sqrt{2}}),\tfrac{5\pi}{3}\bigr)\Bigr\}.
\]
The resulting empirical distortion is
\[
V_{4,2}(P)=\frac{\pi^2}{6}.
\]
This configuration realizes a centroidal Voronoi tessellation in which all four regions have equal area and identical geometric structure.
\end{exam}
\subsection*{Explanation of Example~7.3.}
This example addresses a fully irregular finite dataset on $\D S^2$, not confined
to exact rings, together with a perturbed version of the data. It highlights the
stability results of Section~5 by showing that small geodesic perturbations of
the support lead to only small changes in the optimal representatives and the
resulting distortion. The example provides an intuitive demonstration of
continuity of optimal quantizers and supports the practical relevance of the
theoretical stability analysis.
\begin{exam}[Optimal 6--means]\label{ex:6means}
For $n=6$, the optimal configuration corresponds to the vertices of a regular octahedron: two antipodal points at the poles and four equally spaced points on the equator, located at longitudes $0^\circ$, $90^\circ$, $180^\circ$, and $270^\circ$. Thus an optimal configuration is
\[
Q^*=\bigl\{(\tfrac{\pi}{2},\theta_0),\,(-\tfrac{\pi}{2},\theta_0),\,(0,0),\,(0,\tfrac{\pi}{2}),\,(0,\pi),\,(0,\tfrac{3\pi}{2})\bigr\}
\]
for some $\theta_0\in[0,2\pi)$. 
The corresponding empirical distortion is
\[
V_{6,2}(P)=\frac{\pi^2}{8}.
\]
This example  illustrates how increasing $n$ yields finer spherical partitioning, in this case into six congruent spherical regions.
\end{exam}

\begin{exam} [Optimal 12--means]
For $n=12$, the optimal configuration corresponds to the vertices of a regular icosahedron inscribed in $\D S^2$.
Although listing coordinates is more cumbersome, the configuration is well known and highly symmetric. The
distortion decreases further in this case.
\end{exam} 
\medskip

\subsection{Implementation Notes and Pseudo--Code}
The following pseudo--code outlines a simple implementation of the spherical Lloyd algorithm for computing optimal
$n$--means. The notation follows that of Section~6.

\medskip

\begin{verbatim}
Initialize Q^(0) = {q_1^(0), ..., q_n^(0)} on \D S^2
for r = 0, 1, 2, ... until convergence do
    # Step (i): Voronoi partition
    for each data point x_i in X do
        assign x_i to cluster j minimizing d_G(x_i, q_j^(r))
    end for

    # Step (ii): Intrinsic mean update
    for j = 1 to n do
        q_j^(r+1) = IntrinsicMean( X_j(Q^(r)) )
    end for
end for
return Q^(r)
\end{verbatim}

\medskip

The intrinsic mean may be computed using an iterative gradient descent or fixed--point scheme on $\D S^2$. In practical
implementations, care must be taken to ensure numerical stability when points in a cluster are nearly antipodal.

\medskip

\noindent\textbf{Remark 7.6.}
Different initialization strategies may lead to different local minima. Running the algorithm multiple times and
selecting the configuration with the smallest final distortion is recommended in practice.

 \medskip

\noindent\textbf{Remark 7.7.} (Comparison with existing algorithms).
Several algorithms related to clustering and quantization on manifolds have been proposed in the
literature, including extrinsic spherical $k$-means methods based on Euclidean embeddings and
intrinsic Lloyd-type algorithms on Riemannian manifolds.
Classical Euclidean $k$-means and Lloyd algorithms (e.g., \cite{Pollard1982,GrafLuschgy2000})
do not account for intrinsic geodesic geometry and therefore are not directly applicable to the
problem studied here.
Manifold-based extensions (e.g., intrinsic $k$-means using Karcher means; see \cite{Karcher1977,Afsari2011})
are closer in spirit, but they are typically formulated for continuous distributions or general
manifolds rather than finite discrete uniform data on $\D S^2$.
The algorithm proposed in Section~\ref{sec6} is specifically tailored to discrete spherical data and exploits
the geometric structure of optimal Voronoi partitions established in earlier sections, which explains
its stability and rapid convergence observed in the numerical examples.

We formalize the notions of irregular data, multi-ring data, and their perturbations used below.

\begin{defi}[Irregular finite dataset on $\D S^2$]
An \emph{irregular finite dataset} on the sphere $\D S^2$ is a finite set
\[
X=\{x_1,\dots,x_M\}\subset \D S^2
\]
that does not possess any prescribed geometric regularity, such as equal spacing,
rotational symmetry, or confinement to a fixed number of latitudinal rings.
Equivalently, no assumption is made on the mutual geodesic distances
$d_G(x_i,x_j)$ beyond finiteness.
\end{defi} 

\begin{defi}[Multi-ring data]
A finite dataset $X\subset \D S^2$ is called \emph{multi-ring data} if there exist
distinct latitudes $\phi_1<\cdots<\phi_T$ such that
\[
X=\bigcup_{t=1}^T R_t,\qquad
R_t=\{(\phi_t,\theta_{t,1}),\dots,(\phi_t,\theta_{t,N_t})\},
\]
where each $R_t$ lies entirely on the latitude $\phi_t$.
If the longitudes $\{\theta_{t,j}\}_{j=1}^{N_t}$ are equally spaced for each $t$,
the data are called \emph{regular multi-ring data}; otherwise they are called
\emph{irregular multi-ring data}.
\end{defi}
\begin{defi}[Perturbed finite dataset]
Let $X=\{x_1,\dots,x_M\}\subset \D S^2$ be a finite dataset.
A \emph{perturbed version} of $X$ is a dataset
\[
X'=\{x'_1,\dots,x'_M\}\subset \D S^2
\]
together with a fixed one-to-one correspondence $x_i\leftrightarrow x'_i$,
such that
\[
\varepsilon:=\max_{1\le i\le M} d_G(x_i,x'_i)
\]
is small. The quantity $\varepsilon$ is called the \emph{perturbation magnitude}.
\end{defi}
\medskip  
Now, we conclude the numerical section by illustrating the behavior of the algorithm on
irregular datasets, multi-ring configurations, and perturbed data, which more directly
reflect the structural results developed earlier in the paper.

 \subsection{Numerical Experiments on Irregular and Multi--Ring Data}

The numerical examples presented earlier in this section focused primarily on highly
symmetric configurations (e.g., antipodal sets and Platonic solids), which are useful
for benchmarking but do not fully illustrate the scope of the structural results
developed in Sections~4 and~5.
We therefore include here several additional experiments designed to demonstrate the
behavior of the spherical Lloyd--type algorithm on irregular data, multi--ring
configurations, and perturbed datasets.
The goal of these examples is qualitative illustration rather than numerical
optimization.

\medskip
\noindent\textbf{Example 7.1 (Irregular finite dataset).}
We generate a finite set $X\subset \D S^2$ consisting of $M=40$ points sampled
independently from a nonuniform distribution on the sphere, with higher density near
a prescribed region and sparse coverage elsewhere.
Starting from random initial representatives, the spherical Lloyd algorithm is run
until convergence.
The resulting configuration is centroidal but lacks any global symmetry.
The Voronoi cells adapt to the local geometry of the data, illustrating that the
algorithm and the centroidal condition apply equally well to irregular finite
datasets and are not restricted to symmetric point clouds.

\medskip
\noindent\textbf{Example 7.2 (Multi--ring configuration and allocation).}
We consider a dataset supported on several distinct latitudinal rings, with unequal
numbers of points on each ring.
For a fixed number $n$ of representatives, the Lloyd iteration consistently converges
to configurations in which representatives remain confined to individual rings, with
no cross--ring mixing.
Moreover, the number of representatives assigned to each ring agrees with the
allocation rule predicted by the discrete marginal--drop (water--filling) principle
described in Section~4.
This example provides numerical confirmation of the no cross--ring mixing phenomenon
(Theorem~\ref{thm:component-purity} and
Theorem~\ref{thm:ring-allocation}(ii))) and illustrates how the global allocation problem decouples
across rings.

\medskip
\noindent\textbf{Example 7.3 (Perturbed multi--ring data and stability).}
To illustrate stability under perturbations, we perturb the multi--ring dataset in
Example~7.2 by applying small random geodesic displacements to each point.
Repeating the Lloyd iteration, we observe that the resulting representatives undergo
only minor changes in position, and the overall ring allocation remains unchanged for
sufficiently small perturbations.
This behavior is consistent with the Lipschitz--type stability result established in
Lemma~\ref{lem:distortion-stability} and demonstrates that the qualitative structure of optimal configurations is
robust under moderate perturbations of the support.

\medskip
These examples illustrate that the theoretical results developed in this paper apply
beyond idealized symmetric settings.
In particular, they demonstrate the relevance of the no cross--ring mixing principle,
the discrete allocation mechanism, and stability under perturbations in more general
and irregular finite configurations on the sphere.

 \section{Discussion, Practical Insights, and Future Work}\label{sec8}

\subsection{Theoretical Perspective}
The results established in this paper provide a rigorous framework for understanding optimal $n$--means on
$\D S^2$. The existence and characterization of optimal configurations, together with the centroidal property,
give structural insight into how representatives must be arranged on the sphere to minimize the distortion.
The iterative construction in Section~\ref{sec6} offers a practical approach for obtaining centroidal Voronoi
configurations. These results form a natural extension of the classical Euclidean theory of optimal
quantization to the spherical setting, where curvature plays an essential role.

\subsection{Interpretation of Numerical Results}
The numerical examples presented in Section~7 illustrate the geometric behavior of optimal configurations
for small values of $n$. For $n=2$, the representatives converge to antipodal points, while for
$n=3$ and $n=4$, the optimal configurations correspond to the vertices of a regular equilateral triangle
on the equator and a regular tetrahedron, respectively. As $n$ increases, the representatives distribute
themselves more uniformly over the sphere, and the distortion values decrease accordingly. These examples
demonstrate that the spherical centroidal Voronoi configurations reflect the underlying symmetry of $\D S^2$
and that the algorithm performs consistently with theoretical expectations.

\subsection{Practical Considerations for Implementation}
The iterative procedure described in Section~6 is straightforward to implement, and the use of the intrinsic
(Karcher) mean ensures that representatives remain on $\D S^2$ throughout the algorithm. Although the
distortion decreases monotonically, the method may converge to a local minimum depending on the initial
configuration. In practice, it is advisable to run the algorithm multiple times with different initializations and
select the configuration with the smallest final distortion. Numerical stability must also be considered when
computing intrinsic means, particularly when cluster points are nearly antipodal or concentrated in a small
region. Nevertheless, the algorithm is computationally efficient and performs well even for moderately large
values of $n$.
\subsection{Future Research Directions}
There are several promising directions for future research. One natural extension is to consider non--uniform
probability distributions on $\D S^2$, where the density varies across the surface; in such settings, the intrinsic
mean computation may require numerical integration or Monte Carlo methods. Another direction is to explore
higher--dimensional analogues on $S^d$ for $d \ge 3$, where the geometry is richer and more complex. More
broadly, the study of constrained and weighted quantization on general Riemannian manifolds presents many
interesting challenges, particularly in relation to curvature effects and manifold geometry. From a computational
perspective, improving initialization strategies, accelerating the computation of intrinsic means, and developing
methods that avoid local minima would significantly enhance practical performance. Potential applications in
directional statistics, data science, and machine learning on spherical domains also provide fertile ground for
further exploration.

 \subsection*{Acknowledgment} The author would like to thank the anonymous referee for a careful reading of the manuscript and for insightful comments and suggestions that significantly improved the clarity, organization, and presentation of the paper.

\section*{Declaration}

\noindent\textbf{Conflicts of interest.} The author declares that there are no conflicts of interest.

\vspace{0.2cm}

\noindent\textbf{Data availability.} No data were generated or analyzed in this study.

\vspace{0.2cm}

\noindent\textbf{Code availability.} Not applicable.

\vspace{0.2cm}

\noindent\textbf{Author's contribution.} The author is solely responsible for the entire content of this manuscript.

\end{document}